\newtheorem{thm}{Theorem}[section]
\newtheorem{lemma}[thm]{Lemma}
\newtheorem{defn}[thm]{Definition}
\newtheorem{prop}[thm]{Proposition}
\newtheorem{example}[thm]{Example}
\newtheorem{notation}[thm]{Notation}
\newtheorem{observation}[thm]{Observation}
\newcommand{\N}{\mathbb{N}}
\newcommand{\Z}{\mathbb{Z}}
\newcommand{\Q}{\mathbb{Q}}
\newcommand{\R}{\mathbb{R}}
\newcommand{\M}{\mathfrak{M}}
\newcommand{\Qp}{\mathbb{Q}_p}
\newcommand{\Zp}{\mathbb{Z}_p}
\newcommand{\Fq}{\mathbb{F}_q}
\newcommand{\EE}{\mathcal{E}}
\newcommand{\ilim}{\mathop{\varprojlim}\limits}
\newcommand{\bfalpha}{{\boldsymbol{\alpha}}}
\newcommand{\bfbeta}{{\boldsymbol{\beta}}}
\newcommand{\bfs}{{\boldsymbol{s}}}
\newcommand{\bfst}{{\bf{st}}}
\newcommand{\bfone}{{\boldsymbol{1}}}
\newcommand{\m}{{\rm m}}
\title{Sharp bounds for the number of roots of univariate fewnomials}
\author{Mart\'\i n Avenda\~no\thanks{Partially supported by NSF MCS grant DMS-0915245.}\\
       \small Texas A\&M University, Department of Mathematics\\[-0.8ex]
       \small Milner Bldg.~023, College Station, TX 77843-3368, USA\\
       \small \texttt{avendano@math.tamu.edu}\\
       \and
       Teresa Krick\thanks{Research supported by grants UBACYT X-113, 2008-2010, and CONICET PIP-11220090100801, 2010-2012.}\\
       \small Departamento de Matem\'atica, FCEyN, Universidad de Buenos Aires and CONICET\\[-0.8ex]
       \small Ciudad Universitaria, --1428-- Buenos Aires, Argentina\\
       \small \texttt{krick@dm.uba.ar}\\       
}
\date{\today}
\begin{document}

\maketitle

\begin{abstract}
Let $K$ be a field and $t\geq 0$. Denote by $B_\m(t,K)$ the supremum
of the number of roots in $K^\ast$, counted with multiplicities,
that can have   a non-zero polynomial in $K[x]$ with at most $t+1$
monomial terms. We prove, using an unified approach based on
Vandermonde determinants,
 that $B_\m(t,L)\leq
t^2 B_\m(t,K)$ for any local field $L$ with a non-archimedean
valuation $v:L\to\R\cup\{\infty\}$ such that $v|_{\Z_{\neq 0}}\equiv
0$ and residue field $K$, and that $B_\m(t,K)\leq (t^2-t+1)(p^f-1)$
for any finite extension $K/\Qp$ with residual class degree~$f$ and
ramification index~$e$, assuming that $p>t+e$. For any finite
extension $K/\Qp$, for  $p$ odd, we also show the lower bound
$B_\m(t,K)\geq (2t-1)(p^f-1)$, which gives the sharp estimation
$B_\m(2,K)=3(p^f-1)$ for trinomials when $p>2+e$.
\end{abstract}

\noindent {\bf Keywords:} Lacunary polynomials, Root counting, Local fields, Generalized Vandermonde determinants.

\noindent {\bf Mathematics Subject Classifications:} 11S05, 13F30.

\section{Introduction}\label{sec-intro}

\begin{defn}\label{def1}
Let $K$ be a field and let $t\geq 0$. We denote by $B_1(t,K)$ and
$B_\m(t,K)$ the supremum of the  number of roots in $K^\ast$,
counted without/with multiplicities respectively, that can have a
non-zero polynomial in $K[x]$ with at most $t+1$ monomial terms.
\end{defn}

Since a  monomial can not have non-zero roots, we have
$B_1(0,K)=B_\m(0,K)=0$ for any field $K$. For this reason, we
restrict our attention to the case $t \ge 1$. Note also that
$B_1(t,K)\leq B_\m(t,K)$ for any field $K$ and any $t\geq 0$.
Moreover, if $K$ is a field of characteristic zero, it can be shown
(by taking  derivatives) that any root in $K^\ast$ of a polynomial
with $t+1$ non-zero terms has multiplicity at most $t$, hence
$B_\m(t,K)\leq t\,B_1(t,K)$, although we do not even know whether
$B_\m(t,K)$ might be greater than $B_1(t,K)$ in this case. When $K$
is a field of characteristic $p\neq 0$, the binomials
$x^{p^n}-1=(x-1)^{p^n}\in K[x]$, which have the root $x=1$ with
multiplicity $p^n$, show that $B_\m(t,K)=\infty$ for any $t\geq 1$.
Similarly, for any algebraically closed field $K$ and $t\geq 1$, we
have $B_1(t,K)=B_\m(t,K)=\infty$, since the binomials $x^d-1$ have
$d$ different roots in $K^\ast$ for any positive integer $d$ not
divisible by the characteristic of~$K$.


\bigskip

For the field of  real numbers~$\R$, it is well-known by
Descartes' rule of signs that $B_1(t,\R)\leq B_\m(t,\R)\leq 2t$.
Furthermore, the equality holds since this upper bound is attained
by the polynomials $(x^2-1^2)(x^2-2^2)\cdots(x^2-t^2)\in\R[x]$, that
have exactly $t+1$ non-zero terms and $2t$  simple real roots. This
result extends straightforwardly to any ordered field by the corresponding
generalization of Descartes'rule of signs (and since the same
example stays valid), see for
instance in \cite[Prop.~1.2.14]{BCR}:

\begin{thm}\label{thm1}
Let $K$ be an ordered field. Then $$B_1(t,K)=B_\m(t,K)=2t.$$
\end{thm}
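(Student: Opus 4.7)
The plan is to match the two bounds separately, following the standard argument for $\R$ and checking that nothing depends on $K$ beyond its order.

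For the upper bound $B_\m(t,K)\leq 2t$, I would invoke the generalized Descartes' rule of signs for ordered fields cited in \cite[Prop.~1.2.14]{BCR}: if $f\in K[x]$ has $t+1$ non-zero monomial terms, then the number of strictly positive roots of $f$ in $K$, counted with multiplicities, is bounded by the number of sign variations in the coefficient sequence, which is at most $t$. Applying the same statement to $f(-x)$, which also has at most $t+1$ non-zero terms, bounds the number of strictly negative roots of $f$ in $K$ by $t$. Adding these two contributions gives at most $2t$ roots in $K^\ast$, counted with multiplicities.

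For the lower bound, I would exhibit the polynomial
\[
P(x)=(x^2-1^2)(x^2-2^2)\cdots(x^2-t^2)\in K[x],
\]
which is even of degree $2t$ and therefore of the form $P(x)=\sum_{i=0}^{t}(-1)^{t-i}e_{t-i}\,x^{2i}$, where $e_j$ is the $j$-th elementary symmetric function evaluated at $1^2,2^2,\ldots,t^2$. Each $e_j$ is a positive integer, hence non-zero in $K$ (an ordered field has characteristic zero, so $\Z$ embeds in $K$), so $P$ has exactly $t+1$ non-zero terms. On the other hand, the $2t$ elements $\pm 1,\pm 2,\ldots,\pm t$ of $K^\ast$ are pairwise distinct (again by characteristic zero and the order) and are roots of $P$. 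This forces $B_\m(t,K)\geq B_1(t,K)\geq 2t$.

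Combining both bounds with the trivial inequality $B_1(t,K)\leq B_\m(t,K)$ finishes the proof. The only non-routine point is that one must remember to use that an ordered field has characteristic zero, both to ensure that the $2t$ roots of $P$ are distinct and that none of the binomial-type coefficients of $P$ collapse; all other steps are direct applications of the generalized Descartes rule and of the explicit example.
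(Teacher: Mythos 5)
Your proof is correct, but it is not the one the paper gives. You invoke the generalized Descartes' rule of signs from \cite[Prop.~1.2.14]{BCR} to get the upper bound $B_\m(t,K)\leq 2t$, and this is exactly the "straightforward" argument the introduction acknowledges before announcing that it will instead give "a different proof of this theorem, based on generalized Vandermonde determinants, in order to introduce the technique used in the proof of our main results." The paper's own proof is split in two: it first rules out $B_1(t,K)>2t$ by observing that if $t+1$ roots $r_0,\ldots,r_t$ of the same sign existed, the linear system $M_\bfalpha(r_0,\ldots,r_t)(a_0,\ldots,a_t)^T=0$ would force the generalized Vandermonde determinant $V_\bfalpha(r_0,\ldots,r_t)=V_{\bfst}(r_0,\ldots,r_t)P_\bfalpha(r_0,\ldots,r_t)$ to vanish, contradicting the nonnegativity of the coefficients of $P_\bfalpha$; it then handles multiplicities by replacing $M_\bfalpha$ with the confluent Vandermonde matrix $M^\bfs_\bfalpha$. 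Both routes then use the same example $(x^2-1^2)\cdots(x^2-t^2)$ for the lower bound, so that part of your argument matches. Your Descartes route is shorter and self-contained for the ordered-field case, but it gives nothing reusable; the Vandermonde route is deliberately chosen because the same determinant machinery (Lemma~\ref{formula-vander1}, Lemma~\ref{prop-binom}) carries over to the local-field and $p$-adic bounds of Theorems~\ref{thm2} and~\ref{thm3}, where no analogue of Descartes' rule is available. Your side remark that an ordered field has characteristic zero, so the coefficients $e_j$ are nonzero and the $2t$ roots are distinct, is a point the paper leaves implicit but is worth stating.
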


Here we give a different proof of this theorem, based on generalized
Vandermonde determinants, in order to introduce the technique used
in the proof of our main results.

\bigskip

Recall that if $K$ is an ordered field, then also the
field of formal power series $K((u))$ and the field of Puiseux
series $K\{\{u\}\}=\bigcup_{n\geq 1}K((u^{1/n}))$ are ordered (by
saying that a power series is positive if and only if its first
non-zero coefficient, i.e. the one with minimum power of $u$, is
positive). Also the field of rational functions $K(u)$ can be
ordered by embedding it into $K((u))$. Theorem~\ref{thm1} implies
that $B_1(t,K\{\{u\}\})=B_\m(t,K\{\{u\}\})=2t$ for any ordered
field~$K$.

\bigskip

For other fields, the situation can be dramatically different. For
instance, B.~Poonen showed in~\cite[Thm.~1]{Poonen}, that in the
case $K=\Fq$, we have $B_1(t,\Fq\{\{u\}\})=q^t$. In the case of a
field $K$ of characteristic zero, next result gives a bound for
$B_1(t,K\{\{u\}\})$ and $B_\m(t,K\{\{u\}\})$ in terms of $B_1(t,K)$
and $B_\m(t,K)$.

\begin{thm}\label{thm2}
Let $L$ be a local field with a valuation $v:L\to\R\cup\{\infty\}$
such that $v(n\cdot 1_L)=0$ for all $n\in\Z\setminus\{0\}$, and  let
$K$ be its residue field. Then $$B_1(t,L)\leq t^2\,B_1(t,K)\ \mbox{
and } \ B_\m(t,L)\leq t^2\,B_\m(t,K).$$
\end{thm}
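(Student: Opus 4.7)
The plan is to extend the Vandermonde-determinant technique of Theorem~\ref{thm1} by exploiting the valuation and residue structure of $L$ to reduce the problem to a count over the residue field $K$. Write $f(x)=\sum_{i=0}^t c_i x^{a_i}$ with $c_i\in L^\ast$, list the roots of $f$ in $L^\ast$ as $\alpha_1,\ldots,\alpha_r$ with multiplicities $m_1,\ldots,m_r$, and form the confluent generalized Vandermonde matrix $M$ whose columns are indexed by the pairs $(\alpha_j,k)$ with $0\leq k<m_j$, rows by $i\in\{0,\ldots,t\}$, and entries $M_{i,(j,k)}=\binom{a_i}{k}\alpha_j^{a_i-k}$. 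The vanishing of all Hasse derivatives $D^k f(\alpha_j)$ for $k<m_j$ translates into $(c_0,\ldots,c_t)M=0$, so $\operatorname{rank}(M)\leq t$ and every $(t+1)\times(t+1)$ minor of $M$ vanishes. The strategy is to show that whenever the total number of columns exceeds $t^{2}B_{\m}(t,K)$, a non-singular $(t+1)\times(t+1)$ submatrix can be exhibited, giving a contradiction.

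To organize the count, classify each $\alpha_j$ by its valuation $\rho_j=v(\alpha_j)$ and, after dividing by any fixed $u_{\rho_j}\in L^\ast$ with $v(u_{\rho_j})=\rho_j$, by its residue $\gamma_j=\overline{\alpha_j/u_{\rho_j}}\in K^\ast$. The theorem then follows from three estimates: (a) the set of distinct valuations $\{\rho_j\}$ has at most $t$ elements; (b) for each fixed $\rho$, the set of distinct residues of roots with valuation $\rho$ has at most $B_{\m}(t,K)$ elements; (c) for each pair $(\rho,\gamma)$, the total multiplicity of roots of that type is at most $t$. Estimate (a) comes from choosing $t+1$ columns $(\alpha_{j_l},0)$ with pairwise distinct valuations: the Leibniz expansion of the resulting Vandermonde minor assigns valuation $\sum_l a_{\sigma(l)}\rho_{j_l}$ to each permutation $\sigma$, and the rearrangement inequality identifies a unique minimizing $\sigma$, so the minor is non-zero. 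Estimate (b) follows by scaling the rows of the submatrix by $u_\rho^{-a_i}$ and the columns by $u_\rho^{k}$, then reducing modulo the maximal ideal to obtain a matrix over $K$ with columns $(\gamma_j^{a_i})_i$ of rank at most $t$; its left kernel yields a nonzero polynomial in $K[x]$ with at most $t+1$ monomials vanishing at every such $\gamma_j$, bounding the count by $B_1(t,K)\leq B_{\m}(t,K)$. Multiplying (a), (b), (c) gives $B_{\m}(t,L)\leq t\cdot B_{\m}(t,K)\cdot t = t^{2}B_{\m}(t,K)$, and the $B_1$-version is analogous.

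The main obstacle is estimate (c). After the same scaling and reduction as in (b), the entries $\binom{a_i}{k_l}\gamma^{a_i-k_l}$ of the reduced submatrix depend only on $\gamma$ and $k_l$, not on the choice of root, so whenever two selected columns share a derivative index $k_l$ the reduced matrix has repeated columns and its determinant vanishes at leading order. To recover nontrivial information I would switch to the local coordinate $y=\alpha/u_\rho-\gamma'$, with $\gamma'$ any lift of $\gamma$, and analyze the Newton polygon of $g(y):=f(u_\rho(\gamma'+y))$: the roots of $g$ with $v(y)>0$ are, with multiplicity, exactly the roots of $f$ of type $(\rho,\gamma)$, and a direct computation of $v(g_k)=v\bigl(\sum_i c_i u_\rho^{a_i}\binom{a_i}{k}\gamma'^{a_i-k}\bigr)$ identifies the smallest $k$ at which $v(g_k)$ reaches the minimum $\min_i(v(c_i)+\rho\,a_i)$ with the multiplicity $m_\gamma$ of $\gamma$ as a root of the edge polynomial $\bar f_\rho\in K[x]$ associated to slope $-\rho$. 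Hence the total multiplicity in question equals $m_\gamma$, and since $K$ inherits characteristic zero from the hypothesis $v|_{\Z\setminus\{0\}}\equiv 0$ and $\bar f_\rho$ has at most $t+1$ monomial terms, the standard derivative argument on $\bar f_\rho$ gives $m_\gamma\leq t$.
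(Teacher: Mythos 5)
Your proof is correct, and the overall factorization into three counts (at most $t$ distinct valuations, at most $B_1(t,K)$ distinct residues per valuation, total multiplicity at most $t$ per valuation–residue pair) is precisely the paper's own decomposition: your estimates (a) and (b) are the content of Proposition~\ref{lem3}, and your estimate (c) is the statement $D_\m(t,L)\le t$ of Proposition~\ref{thm5}. Where you differ substantively is in how (c) is proved. The paper stays inside the Vandermonde framework: it writes $r_i=1+x_i$, applies the expansion $P_\bfalpha^\bfs(1+x_0,\dots,1+x_m)=\sum_{\bfbeta}W_\bfbeta(\bfalpha)P_\bfbeta^\bfs(x_0,\dots,x_m)$ of Lemma~\ref{formula-vander1}, and observes that the $\bfbeta=\bfst$ term is a nonzero integer of valuation $0$ while every other term has strictly positive valuation, so the confluent Vandermonde determinant is a unit. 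You correctly diagnose that the naive confluent minor collapses after reduction mod $\M$ (repeated derivative columns) and instead switch to the local coordinate $y=\alpha/u_\rho-\gamma'$ and read the bound off the Newton polygon of $g(y)=f(u_\rho(\gamma'+y))$, identifying the multiplicity count with $m_\gamma$, the order of vanishing of the slope-$(-\rho)$ edge polynomial $\bar f_\rho$ at $\gamma$, and then invoking the characteristic-zero derivative argument to get $m_\gamma\le t$. (Two small wording issues: the multiplicity in $L$ is \emph{at most} $m_\gamma$, not exactly $m_\gamma$; and since your estimate (b) actually produces $B_1(t,K)$, you get the slightly sharper $B_\m(t,L)\le t^2B_1(t,K)$, which is also what the paper's proof yields.) Your Newton-polygon route is arguably more elementary and self-contained, but the paper's Vandermonde route is designed to be reused: the identical expansion with $W_\bfbeta(\bfalpha)$ drives Proposition~\ref{thm9} for $p$-adic fields, where your argument would break down because the binomial coefficients $\binom{a_i}{k}$ can acquire positive $p$-adic valuation and the clean identification of $v(g_k)$ with the residue edge polynomial fails. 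So the two proofs buy different things: yours is leaner for this theorem alone; the paper's is calibrated for the unified treatment announced in the abstract.
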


Note that the assumption $v(n\cdot 1_L)=0$ for all
$n\in\Z\setminus\{0\}$ implies that $L$ is a field of characteristic
zero, because otherwise we would obtain a contradiction in $v({\rm
char}(L)\cdot 1_L)=v(0)=\infty$. Also, by construction of the
residue field, we have that $v({\rm char}(K)\cdot 1_L)>0$, and hence
$K$ is also implied to be of characteristic zero. Theorem~\ref{thm2}
can be applied to the fields $L=K((u))$ or $L=K\{\{u\}\}$, as long
as a bound for $B_1(t,K)$ or $B_\m(t,K)$ is provided. The valuation
on $L$ used in this case is the trivial one, i.e. $v|_{K^\ast}=0$
and $v(u)=1$. Unfortunately, the bound obtained is not sharp in
general. For instance, the case $K=\R$ give us
$B_1(t,\R\{\{u\}\})\leq B_\m(t,\R\{\{u\}\})\leq 2t^3$, while the
sharpest bound is $2t$.

\bigskip

Theorem~\ref{thm2} can not be applied to the field $\Qp$ of $p$-adic
numbers (nor to any finite extension $K/\Qp$), since its residue
field has non-zero characteristic. In the case of a finite extension
$K$  of $\Q_p$ with ramification index $e$ and residue class degree
$f$, H.W.~Lenstra proved in~\cite[Prop.~7.2]{Lenstra} that
$$B_\m(t,K)\leq
c\,t^2\,(p^{f}-1)\,(1+e\log(e\,t/\log(p))/\log(p)),$$
$c=e/(e-1)\approx 1.58197671$. Our following result improves
Lenstra's  for   prime numbers $p$ large enough with respect to the
number of non-zero terms.

\begin{thm}\label{thm3}
Let $K/\Qp$ be a finite extension, with ramification index $e$ and
residue class degree $f$. Assume that $p>e + t$. Then
$$B_\m(t,K)\leq (t^2-t+1)\,(p^{f}-1).$$
\end{thm}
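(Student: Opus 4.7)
The target $(t^2-t+1)(p^f-1)$ factors as the cardinality of the non-zero residue classes $\mathfrak{k}^{\ast}=(\mathcal{O}_K/\mathfrak{m})^{\ast}$ (with $\mathfrak{k}\cong\Fq$, $q=p^f$) times a uniform per-class constant $t^2-t+1$, so the natural plan is to partition the non-zero roots of $f$ by their residue image and bound each class contribution separately, adapting the generalized-Vandermonde technique of the proofs of Theorems~\ref{thm1} and~\ref{thm2} to the $p$-adic setting via Teichm\"uller lifts. I would write $f(x)=\sum_{i=0}^{t}c_i x^{a_i}$ with $a_0<\cdots<a_t$ and normalize $c_i\in\mathcal{O}_K$, $\min_i v(c_i)=0$, and (after multiplying by $x^{-a_0}$) $a_0=0$. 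After a Newton-polygon decomposition and the rescaling $x\mapsto\pi^{-er}x$ that sends roots of a given valuation $r$ to units in $\mathcal{O}_K^{\ast}$, each class of unit roots corresponds to an element $\omega\in\mathfrak{k}^{\ast}$, giving at most $p^f-1$ classes to bookkeep.

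\paragraph*{Per-class bound via generalized Vandermonde determinants.}
Fix $\omega\in\mathfrak{k}^{\ast}$ with Teichm\"uller lift $\tilde\omega$ and write the $N_\omega$ roots (with multiplicity) in the class as $\tilde\omega(1+\pi y_j)$ with $y_j\in\mathcal{O}_K$. The coefficient vector $(c_i\tilde\omega^{a_i})_{0\le i\le t}$ lies in the left-kernel of the $N_\omega\times(t+1)$ generalized Vandermonde matrix whose rows come from the monomials $x^{a_i}$ and their iterated derivatives evaluated at these roots with multiplicities. The core of the argument is to show that, if $N_\omega>t^2-t+1$, one can extract a $(t+1)\times(t+1)$ submatrix whose determinant, after expanding each $(1+\pi y_j)^{a_i}$ by the binomial theorem, reduces modulo $\mathfrak{m}$ to a non-zero integer determinant built from the binomial coefficients $\binom{a_i}{k}$. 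The resulting non-vanishing contradicts the kernel condition, and summing over the at most $p^f-1$ classes produces the claimed bound.

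\paragraph*{Role of $p>t+e$ and main obstacle.}
The hypothesis $p>t+e$ enters precisely to control the $p$-adic size of the integer Vandermonde emerging from the binomial expansion: it guarantees that all factorials $k!$ for $k\le t$ are units under $v$ and that the ramification corrections introduced by $\pi$ (which have valuation multiples of $1$, with $e$ governing the conversion) do not overwhelm the integer entries, so the reduced determinant survives non-zero in $\mathfrak{k}$. The main technical obstacle will be executing the per-class bound sharply: since all roots in a class are clustered near $\tilde\omega$, the naive Vandermonde is highly degenerate modulo $\mathfrak{m}$, and a careful selection of rows — mixing different roots $y_j$ with different Taylor orders of differentiation — is needed to expose a non-degenerate integer Vandermonde. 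The constant $t^2-t+1$ should emerge as exactly the combinatorial threshold past which such a selection is always possible, and making this precise is the substantive part of the proof.
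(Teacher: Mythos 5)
Your outline has the right ingredients — Newton polygon decomposition, partition by residue (``first digit'') class, generalized Vandermonde determinants with the binomial expansion $(1+\pi y)^{a_i}=\sum_k\binom{a_i}{k}\pi^k y^k$, and the observation that $p>t+e$ controls $v_p$ of the small factorials — but it misattributes the source of the constant $t^2-t+1$, and that misattribution hides the genuinely hard step.

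The Vandermonde machinery you describe cannot give a per-class threshold of $t^2-t+1$: a $(t+1)\times(t+1)$ nonsingular submatrix extracted from roots of $f$ forces the coefficient vector to vanish, so the natural conclusion is that any fixed (valuation, first digit) class contains at most $t$ roots counted with multiplicity (this is exactly the paper's Proposition~\ref{thm9}, $D_\m(t,K)\le t$). Running that bound over the at most $t$ Newton-polygon segments and the at most $q-1$ residue classes per segment gives $t^2(q-1)$, not $(t^2-t+1)(q-1)$. The shaving of $t-1$ copies of $q-1$ comes from an entirely separate observation (Proposition~\ref{lem6} and Lemma~\ref{divis}): if some segment of the Newton polygon has $p\nmid\alpha_{i+1}-\alpha_i$, then the roots with the corresponding valuation are all \emph{simple} and in bijection, via Hensel/Newton, with the roots of the binomial $a_ix^{\alpha_i}+a_{i+1}x^{\alpha_{i+1}}$, of which there are at most $q-1$ in total — not $t$ per class. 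So one segment contributes $q-1$ roots and the other $t-1$ segments contribute at most $tD_\m(t,K)(q-1)$ each... wait, no: each of the remaining $t-1$ segments contributes at most $D_\m(t,K)(q-1)\le t(q-1)$ roots, giving $(t-1)t(q-1)+(q-1)=(t^2-t+1)(q-1)$. You will also need to dispose of the case where \emph{every} consecutive gap $\alpha_{i+1}-\alpha_i$ is divisible by $p$: there $f(x)=g(x^p)$ and one reduces to $g$, which needs the fact that $K$ contains only the trivial $p$-th root of unity (the paper's Lemma~\ref{raicesunidad}, which uses $p-1\nmid e$, a consequence of $p>e+t\ge e+1$).

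In short: your per-class Vandermonde argument is sound for proving $D_\m(t,K)\le t$, and your reading of why $p>t+e$ matters (unit factorials, bounding $v_p(\beta_i!)$ against the homogeneity degree divided by $e$) is on target. What is missing is the two-tier counting over Newton-polygon segments, together with the ``one segment is special'' lemma that produces simple roots bounded by $q-1$ independent of $t$. Without that, the best your framework yields is $t^2(q-1)$.
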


The previous bound  is  sharp for binomials (i.e.~$t=1$), since the
polynomial $x^{p^{f}}-x\in K[x]$ has $p^{f}-1$ roots in $K^\ast$. It
is also sharp for trinomials (i.e.~$t=2$) when $p>2+e$, thanks to
the following explicit example, see Section~\ref{sec-lower}.

\begin{example}\label{thm4}
Let $p$ be an odd prime number and let $K/\Q_p$ be a finite
extension with residue field of cardinality $q$. Then, the trinomial
$$f(x)=x^{(q-1)(1+q^{q-1})}-(1+q^{q-1})x^{q-1}+q^{q-1}\in K[x]$$
has at least $3(q-1)$ roots in $K^\ast$ counted with multiplicities.
\end{example}

In~\cite{AvIbr}, the authors define the class of regular polynomials
in $K[x]$, where $K$ is a local field with respect to a discrete
valuation  with residue field of cardinality $q<+\infty$, and prove
that the polynomials in this class can not have more than $t(q-1)$
roots in $K^\ast$, counted with multiplicities
\cite[Cor.~4.6]{AvIbr}. Moreover, this bound is sharp for regular
polynomials, since explicit examples (with all simple roots) are
presented. This implies the lower bound $B_\m(t,K)\geq B_1(t,K)\geq
t(q-1)$. Note that  in particular, this  lower bound holds for any
finite extension $K/\Qp$. The following result improves it in this
case by a factor of almost $2$.

\begin{thm}\label{thm99}
Let $p$ be an odd prime number and let $K/\Qp$ be a finite extension
with residue field of cardinality $q$. Then $B_1(t,K)\geq
(2t-1)(q-1)$.
\end{thm}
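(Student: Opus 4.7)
The plan is to prove the lower bound by exhibiting, for each $t\geq 1$, an explicit $(t+1)$-nomial $f_t\in K[x]$ having at least $(2t-1)(q-1)$ distinct roots in $K^*$. First I reduce the task: if $g_t\in K[y]$ is a $(t+1)$-nomial with $2t-1$ distinct roots in the subgroup $(K^*)^{q-1}$ of $(q-1)$-th powers, then $f_t(x) := g_t(x^{q-1})$ is a $(t+1)$-nomial in $x$, and each root $\alpha\in (K^*)^{q-1}$ of $g_t$ provides $q-1$ distinct roots of $f_t$ in $K^*$ via the $\mu_{q-1}$-orbit of any $(q-1)$-th root of $\alpha$, which lies in $K^*$ since $\mu_{q-1}\subset K^*$. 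Distinct roots of $g_t$ yield disjoint orbits, so $f_t$ accumulates $(2t-1)(q-1)$ distinct roots in $K^*$. For the base case $t=1$ take $g_1(y) = y - 1$, so that $f_1(x) = x^{q-1} - 1$ has the $q-1$ Teichm\"uller representatives as its distinct roots.

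The key construction is the trinomial case $t=2$, obtained as a small perturbation of Example~\ref{thm4}. Set $c = q^{q-1}$ and choose $\epsilon\in K^*$ with $v(\epsilon) > 2\,v(c)$. The trinomial
\[
g_2(y) = y^{1+c} - (1+c+\epsilon)\,y + (c+\epsilon)
\]
has three distinct roots in $(K^*)^{q-1}$. First, $y=1$ is a root by direct substitution. Second, writing $g_2(y) = (y-1)\,h(y)$ with $h(y) = y^c+y^{c-1}+\cdots+y-(c+\epsilon)$, one has $h(1) = -\epsilon$ and $h'(1) = c(c+1)/2$, so the Hensel condition $v(h(1)) > 2v(h'(1))$ holds and yields a second root $y_0\in K$ with $y_0 \equiv 1 \pmod{\pi^{v(\epsilon)-v(c)}}$, distinct from $1$ and lying in the principal units $U^{(1)}\subset (K^*)^{q-1}$ (principal units being $(q-1)$-th powers since $\gcd(q-1,p)=1$). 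Third, the Newton polygon of $g_2$ has vertices $(0,v(c))$, $(1,0)$, $(c+1,0)$, and its length-$1$ slope-$(-v(c))$ segment yields by Hensel a degree-$1$ factor whose root $\beta_\epsilon\in K^*$ has valuation $v(c) = ef(q-1)$ divisible by $q-1$; writing $p = \pi^e u_p$ for a unit $u_p$, one has $c/\pi^{v(c)} = u_p^{f(q-1)}\in U^{(1)}$, so the leading approximation $\beta_\epsilon\approx (c+\epsilon)/(1+c+\epsilon)$ gives unit part congruent to $1$ modulo $\pi$, hence $\beta_\epsilon\in (K^*)^{q-1}$. Therefore $f_2(x) := g_2(x^{q-1})$ is a trinomial with $3(q-1)$ distinct roots in $K^*$.

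For $t\geq 3$ one iterates this idea, enriching $g_{t-1}$ by an additional binomial factor that creates a new Newton polygon segment carrying a $(q-1)$-th-power root analogous to $\beta_\epsilon$, together with a further small perturbation $\epsilon_t$ that splits a newly-formed multiple root into two distinct $(q-1)$-th-power roots via the Hensel argument above; a deliberate coefficient cancellation in the expanded product leaves exactly one new monomial term, so each step adds one term and two distinct $(q-1)$-th-power roots, yielding $g_t$ with $t+1$ terms and $2t-1$ distinct roots in $(K^*)^{q-1}$. The main obstacle is the precise coordination of the many scales involved at each step: the exponent of the new binomial factor, its unit part, the valuation $v(\epsilon_t)$ of the new perturbation, and the coefficient to be cancelled must be jointly chosen so that (i) every new Newton polygon segment has integer slope, keeping its root in $K$ rather than in a ramified extension; (ii) each new Hensel-produced root is distinct from the previously constructed ones; (iii) the unit parts of all new roots reduce to $1$ modulo $\pi$, so that each new root lies in $(K^*)^{q-1}$; and (iv) the polynomial retains exactly $t+1$ monomial terms. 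Verifying that such a coordinated choice exists and can be iterated across the hierarchy of scales already present in the Newton polygon of $g_{t-1}$ is the combinatorial core of the argument.
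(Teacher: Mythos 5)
Your reduction to counting roots of a $(t{+}1)$-nomial $g_t$ in the subgroup $(K^*)^{q-1}$ and then substituting $x^{q-1}$ is sound, and the cases $t=1$ and $t=2$ are correct. For $t=2$ your explicit perturbation of the trinomial from Example~\ref{thm4} does produce three distinct roots (one at $1$, one nearby in $U^{(1)}$ split off by Hensel from the multiplicity-two factor, and one of large valuation coming from the slope-$(-v(c))$ Newton polygon segment), and your check that each lies in $(K^*)^{q-1}$ via valuation divisibility and unit-part reduction to $1\bmod\M$ is right.

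However, the step $t\ge 3$ is a plan, not a proof, and the gap is exactly where the difficulty of the theorem sits. Multiplying $g_{t-1}$ (which has $t$ terms) by a binomial factor generically produces $2t$ monomials; you assert that ``a deliberate coefficient cancellation in the expanded product leaves exactly one new monomial term,'' but you give no construction that achieves this, and it is not a mild loose end: you need $t-2$ simultaneous cancellations while also preserving $t-1$ existing Newton polygon segments, inserting a new segment with slope in $\frac{1}{e}\Z$ whose leading digit is a suitable power, splitting a fresh double root into two $(q-1)$-th power roots, and keeping all old roots distinct from all new ones. You explicitly acknowledge that ``verifying that such a coordinated choice exists \ldots is the combinatorial core of the argument,'' which is to say you have not verified it. The paper handles the inductive step with a different device: it replaces $f_t(x)$ by $\hat f_t(x)=f_t(x/p)/f_t(1/p)$, which keeps the term count and pushes each root's valuation up by one, then adjoins a single monomial $x^{\alpha}$ to form $g_\alpha=x^\alpha-\hat f_t$ (so $1$ is automatically a root and the term count rises by exactly one), and finally perturbs by a small constant $\varepsilon$. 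The genuinely hard content is the paper's ``Fact~1,'' which shows one can choose $\alpha$ so that $g_\alpha$ has a second root in $1+p\Zp$ other than $1$; this is proved by contradiction using $p$-adic exponents and a differential-equation argument, and nothing in your proposal addresses the analogous existence question for your construction. As written, your argument establishes the theorem only for $t\le 2$.
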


The previous  results also complement another result by Lenstra,
where the sharp estimate $B_\m(2,\Q_2)=6$ is
shown~\cite[Prop.~9.2]{Lenstra}. He also asks for the exact value of
$B_\m(2,\Qp)$ for other primes $p$. As a consequence of
Theorems~\ref{thm3} and \ref{thm99} and Example~\ref{thm4}  we
derive that
$$B_1(2,\Q_p)=B_\m(2,\Qp)=3(p-1)$$ for any prime $p\geq 5$,
thus leaving the case $p=3$ as the only remaining open question. For
$t=3$, we are also closing the gap to $$5(p-1)\leq B_1(3,\Qp)\leq
B_\m(3,\Qp)\leq 7(p-1)$$ for any prime $p\geq 5$. Moreover, for any
$(t+1)$-nomial over $\Qp$ with $p>t+1$ we deduce $$(2t-1)(p-1)\leq
B_1(t,\Qp)\leq B_\m(t,\Qp)\leq (t^2-t+1)(p-1).$$ A deeper analysis
for the case $t=3$  may give a hint of whether the sharp bound for
$(t+1)$-nomials is linear or quadratic in $t$. Our feeling is that
it should be quadratic although we do not have yet any evidence to
support it.

\section{Generalized confluent Vandermonde determinants}\label{sec-vander}

\begin{defn}\label{def-vander}
Let $\bfalpha=(\alpha_1,\ldots,\alpha_t)\in\N^t$ and for $s\in N$,
$(x_0,\dots, x_{s-1})$ be a group of   $s$ variables. The {\em
generalized Vandermonde matrix} associated to $\alpha$ is defined as
$$M_\bfalpha(x_0,\ldots,x_{s-1})=
       \left(\begin{array}{cccc}
           1 & x_0^{\alpha_1} & \cdots & x_0^{\alpha_t}\\
           \vdots & \vdots & & \vdots \\
           1 & x_{s-1}^{\alpha_1} & \cdots & x_{s-1}^{\alpha_t}\\
       \end{array}\right)\ \in \ \Z[x_0,\dots,x_{s-1}]^{s\times (t+1)} .$$
When $s=t+1$, the polynomial
$$V_\bfalpha(x_0,\ldots,x_t)=\det(M_\bfalpha(x_0,\ldots,x_t))\in \Z[x_0,\ldots,x_t]$$
is called a {\em generalized Vandermonde determinant}.
\end{defn}

We note that when $\bfst=(1,2,\dots,t)$, then
$V_\bfst(x_0,\dots,x_t)$ corresponds to the standard Vandermonde
determinant.

\bigskip

 The basic properties of generalized Vandermonde determinants are summarized in the
following well-known proposition, see for instance ~\cite[Thm.~5]{Evans} or~\cite{Mitchell}.

\begin{prop}\label{prop-vander}
Let $\bfalpha=(\alpha_1,\ldots,\alpha_t)$ with
$0<\alpha_1<\cdots<\alpha_t$. Then
\begin{description}
  \item[(a)] $\displaystyle{V_{\bfst}(x_0,\ldots,x_t)=\prod_{0\leq i<j\leq t}(x_j-x_i)}$.
  \item[(b)] $V_\bfalpha=V_{\bfst}P_\bfalpha$ for some non-zero $P_\bfalpha\in\Z[x_0,\ldots,x_t]$.
  \item[(c)] $V_\bfalpha$ and $P_\bfalpha$ are homogeneous polynomials of degree
  $|\bfalpha|$ and $|\bfalpha|-t(t+1)/2$ respectively.
  \item[(d)] The coefficients of $P_\bfalpha$ are all non-negative.
\end{description}
\end{prop}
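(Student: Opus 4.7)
My plan is to handle the four parts in sequence; items (a), (b), and (c) are short and standard, while (d) carries the real content.

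For (a), I would prove the standard Vandermonde formula by induction on $t$. Working from right to left for $j = t-1, \ldots, 0$, subtract $x_0$ times column $j$ from column $j+1$, so that row $0$ becomes $(1, 0, \ldots, 0)$. Expanding along this row and factoring $(x_i - x_0)$ out of row $i$ for $i = 1, \ldots, t$ reduces the problem to the $t \times t$ Vandermonde in $x_1, \ldots, x_t$, to which the induction hypothesis applies.

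For (b) and (c), the Leibniz expansion of $\det(M_\bfalpha)$ exhibits $V_\bfalpha$ as a $\Z$-linear combination of monomials each of total degree $0 + \alpha_1 + \cdots + \alpha_t = |\bfalpha|$, which gives homogeneity of that degree. Since swapping $x_i$ and $x_j$ permutes two rows of $M_\bfalpha$, the polynomial $V_\bfalpha$ is antisymmetric and hence vanishes on each hyperplane $\{x_i = x_j\}$; the pairwise coprime linear factors $x_j - x_i$ all divide $V_\bfalpha$ in the UFD $\Z[x_0, \ldots, x_t]$, so their product $V_{\bfst}$ does too. This yields the factorization $V_\bfalpha = V_{\bfst}\, P_\bfalpha$ with $\deg(P_\bfalpha) = |\bfalpha| - t(t+1)/2$. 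Non-vanishing of $P_\bfalpha$ follows by specializing $(x_0, \ldots, x_t)$ to distinct positive reals: if $V_\bfalpha$ vanished there, a nonzero linear combination $\sum c_j x^{\alpha_j}$ would have $t+1$ positive real roots, contradicting Theorem~\ref{thm1}.

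For (d), I would identify $P_\bfalpha$ with a Schur polynomial. Setting $\lambda_j := \alpha_{t+1-j} - (t+1-j)$ for $j = 1, \ldots, t$ and $\lambda_{t+1} := 0$, the hypotheses $\alpha_1 \geq 1$ and $\alpha_{k+1} \geq \alpha_k + 1$ translate into $\lambda_1 \geq \lambda_2 \geq \cdots \geq \lambda_{t+1} = 0$, so $\lambda$ is a partition. Reversing the column orders of both $M_\bfalpha$ and $M_{\bfst}$ (the two signs cancel) puts the ratio into the form of Jacobi's bialternant, giving
\[ P_\bfalpha(x_0, \ldots, x_t) \; = \; s_\lambda(x_0, \ldots, x_t). \]
The combinatorial formula $s_\lambda = \sum_T x^T$, summed over semistandard Young tableaux $T$ of shape $\lambda$ with entries in $\{0, 1, \ldots, t\}$, then expresses $P_\bfalpha$ as a sum of monomials with coefficients in $\Z_{\geq 0}$.

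The main obstacle is (d): unlike the other parts it genuinely needs combinatorial input, since the tableau interpretation (or some equivalent substitute) is what guarantees positivity. If one preferred to avoid Schur polynomials, a self-contained alternative would apply the Lindström--Gessel--Viennot lemma to a suitable family of non-intersecting lattice paths in $\Z^2$, writing $P_\bfalpha$ directly as a sum over path families each contributing a monomial with coefficient $+1$.
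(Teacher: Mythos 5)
The paper itself gives no proof of this proposition: it is stated as ``well-known'' with pointers to \cite[Thm.~5]{Evans} and \cite{Mitchell}. Your proof supplies the argument those references would, and the overall route --- antisymmetry and Gauss's lemma for (b), homogeneity from Leibniz for (c), and the bialternant identification of $P_\bfalpha$ with a Schur polynomial $s_\lambda$ for (d) --- is a standard and correct way to establish all four items. The reindexing $\lambda_j = \alpha_{t+1-j} - (t+1-j)$, $\lambda_{t+1}=0$ is right, the inequalities $0<\alpha_1<\cdots<\alpha_t$ do translate into $\lambda$ being a partition with at most $t+1$ parts, and the semistandard tableau expansion indeed gives nonnegative (integer) coefficients. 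The Lindstr\"om--Gessel--Viennot alternative you sketch is also a clean, self-contained substitute.

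One step is flawed as written: the non-vanishing of $P_\bfalpha$. You argue that $V_\bfalpha(r_0,\dots,r_t)=0$ at distinct positive reals would produce a nonzero $(t+1)$-nomial with $t+1$ positive roots, ``contradicting Theorem~\ref{thm1}.'' But Theorem~\ref{thm1} asserts the bound $2t$, and $t+1\le 2t$ for $t\ge1$, so there is no contradiction with that theorem; the bound you actually need is Descartes' rule of signs, which caps the number of positive real roots at the number of sign changes, hence at $t$. Moreover, inside this paper the citation would be circular, since Theorem~\ref{thm1} is derived from Proposition~\ref{prop-vander} itself. The cleanest fixes are either to cite Descartes' rule directly, or to observe that non-vanishing is already a consequence of your own part (d): the set of semistandard tableaux of shape $\lambda$ is non-empty (fill row $i$ with the entry $i$), so $s_\lambda\ne 0$. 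Alternatively, the Leibniz expansion exhibits $x_0^{\alpha_t}x_1^{\alpha_{t-1}}\cdots x_t^{\alpha_0}$ as the unique lexicographically-leading monomial of $V_\bfalpha$, with coefficient $\pm 1$, giving $V_\bfalpha\ne 0$ without any appeal to real roots.
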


We show now, before dealing with multiplicities, how Proposition~\ref{prop-vander}
immediately implies $B_1(t,K)\le 2t$ for ordered fields.

\begin{proof}[Proof of Theorem~\ref{thm1} (first part)]
Suppose that $B_1(t,K)>2t$. Then there exists a non-zero polynomial
$f=a_0+a_1x^{\alpha_1}+\cdots+a_tx^{\alpha_t}\in K[x]$  with
strictly more than $2t$ different roots. Therefore at least $t+1$ of
these roots, say $r_0, \dots, r_t$, are all strictly positive or
strictly negative. The equalities $f(r_i)=0$ for $0\le i\le t$
translate into the matrix identity
$$ M_\bfalpha (r_0,\dots,r_t)\cdot \left( \begin{array} {c} a_0\\ \vdots \\ a_t
\end{array}\right) = \left(\begin{array}{c} 0\\ \vdots \\
0\end{array}\right)$$ and since $f\ne 0$, we conclude that
$V_\bfalpha(r_0,\dots,r_t)=0$. However, by
Proposition~\ref{prop-vander}({\bf d}),
$$V_\bfalpha(r_0,\dots,r_t)=V_\bfst (r_0,\dots, r_t)\,P_\bfalpha (r_0,\dots,r_t)\ne 0$$
since $V_\bfst (r_0,\dots, r_t)\ne 0$ and $P_\bfalpha
(r_0,\dots,r_t)$ is strictly positive or negative according to  the
sign of the $r_i$'s. Contradiction!
\end{proof}

In order to deal with multiple roots, we need a more general version
of Definition~\ref{def-vander} and Proposition~\ref{prop-vander}.

\begin{defn}\label{def-vander-mult}
Let $\bfalpha=(\alpha_1,\ldots,\alpha_t)\in\N^t$, $(x_0,\dots,
x_{m})$ be a group of   $m+1$ variables for $m\ge 0$, and
$\bfs=(s_0,\dots, s_{m})\in\N^{m+1}$. The {\em generalized confluent
Vandermonde matrix} associated to $\bfalpha$ and $\bfs$ is defined
as
$$
   M^\bfs_\bfalpha(x_0,\dots,x_{m})=\stackrel{ \longleftarrow\ \ \scriptstyle{t+1} \ \ \longrightarrow}{\left(
    \begin{array}{ccc}
    & M^{s_0}_\bfalpha(x_0) &
  \\[2pt]
   \hline
  & \vdots & \\[2pt]
  \hline
  & M^{s_{m-1}}_\bfalpha(x_{m-1}) &
  \end{array}\right)}
  \begin{array}{l}
\scriptstyle s_0 \\
{\scriptstyle \ \vdots} \\
{\scriptstyle s_{m}}
  \end{array} \quad \in \ \Z[x_0,\dots, x_{m}]^{|\bfs|\times(t+1)}
$$
where for $0\le i\le m$,
$$M^{s_i}_\bfalpha(x_i)=\left(\begin{array}{cccc}
    1 & x_i^{\alpha_1} & \cdots & x_i^{\alpha_t} \\
    0 & \alpha_1x_i^{\alpha_1-1} & \cdots & \alpha_tx_i^{\alpha_t-1} \\
    \vdots & \vdots & & \vdots \\
    0 & \binom{\alpha_1}{s_i-1}x_i^{\alpha_1-s_i+1} & \cdots & \binom{\alpha_t}{s_i-1}x_i^{\alpha_t-s_i+1}
\end{array}\right) \ \in \ \Z[x_i]^{s_i\times (t+1)}.$$
When $|\bfs|=t+1$,  the polynomial
$$V_\bfalpha^\bfs(x_0,\ldots,x_m)=\det(M_\bfalpha^\bfs(x_0,\ldots,x_m))\in \Z[x_0,\ldots,x_m]$$
is called a {\em generalized confluent Vandermonde determinant}.
\end{defn}

Note that the matrix $M_\bfalpha(x_0,\ldots,x_{s-1})$ of
Definition~\ref{def-vander}  corresponds to the matrix
$M_\bfalpha^\bfone(x_0,\ldots,x_{s-1})$ with
$\bfone=(1,\ldots,1)\in\N^{s}$.

\bigskip

Next result generalizes Proposition~\ref{prop-vander} to these more
general matrices.
\begin{prop}\label{prop-vander-mult}
Let $\bfalpha=(\alpha_1,\ldots,\alpha_t)$ with
$0<\alpha_1<\cdots<\alpha_t$ and let ${\bfst}=(1,2,\ldots,t)$. Let
$\bfs=(s_0,\ldots,s_m)\in\N^{m+1}$ with $|\bfs|=t+1$. Then
\begin{description}
  \item[(a)] $\displaystyle{V_{\bfst}^\bfs(x_0,\ldots,x_m)=\prod_{0\leq i<j\leq m}(x_j-x_i)^{s_is_j}}$.
  \item[(b)] $V_\bfalpha^\bfs=V_{\bfst}^\bfs P_\bfalpha^\bfs$ for some $P_\bfalpha^\bfs\in\Z[x_0,\ldots,x_m]$.
  \item[(c)] $V_\bfalpha^\bfs$ and $P_\bfalpha^\bfs$ are homogeneous polynomials of degree $|\bfalpha|-\sum_{i=0}^m
  s_i(s_i-1)/2$
   and $|\bfalpha|-t(t+1)/2$ respectively.
  \item[(d)] The coefficients of $P_\bfalpha^\bfs$ are all non-negative.
\end{description}
\end{prop}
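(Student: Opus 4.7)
My plan is to deduce Proposition~\ref{prop-vander-mult} from its non-confluent counterpart Proposition~\ref{prop-vander} by realizing the confluent determinant $V^\bfs_\bfalpha$ as the specialization of a suitably row-reduced non-confluent Vandermonde determinant, via divided differences. The first step will be to introduce auxiliary indeterminates $y^{(i)}_k$ for $0\le i\le m$ and $0\le k\le s_i-1$ (a total of $|\bfs|=t+1$, grouped into $m+1$ clusters of sizes $s_0,\ldots,s_m$), and to form the non-confluent matrix $N=M_\bfalpha(y^{(0)}_0,\ldots,y^{(m)}_{s_m-1})$. Proposition~\ref{prop-vander} then gives $\det N=V_\bfst(\mathbf y)\,P_\bfalpha(\mathbf y)$ with $P_\bfalpha$ having non-negative integer coefficients, and $V_\bfst(\mathbf y)$ factors naturally as the product of an intra-cluster part $\prod_i\prod_{0\le l<k\le s_i-1}(y^{(i)}_k-y^{(i)}_l)$ and a cross-cluster part $\prod_{i<i'}\prod_{l,k'}(y^{(i')}_{k'}-y^{(i)}_l)$.

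The second step is to perform, within each cluster $i$, the row operation that replaces the rows of $N$ (the evaluations at $y^{(i)}_0,\ldots,y^{(i)}_{s_i-1}$ of the row-valued polynomial $R(x)=(1,x^{\alpha_1},\ldots,x^{\alpha_t})$) by the successive divided differences $R[y^{(i)}_0],\,R[y^{(i)}_0,y^{(i)}_1],\,\ldots,\,R[y^{(i)}_0,\ldots,y^{(i)}_{s_i-1}]$. This is lower-triangular within the cluster with diagonal entries $\prod_{l<k}(y^{(i)}_k-y^{(i)}_l)^{-1}$, so the new matrix $N'$ satisfies
\[
\det N' \;=\; P_\bfalpha(\mathbf y)\cdot\prod_{i<i'}\prod_{l,k'}(y^{(i')}_{k'}-y^{(i)}_l)
\]
once the intra-cluster factors from $V_\bfst(\mathbf y)$ have cancelled.

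The third step is to specialize $y^{(i)}_k\mapsto x_i$ for all $k$. By the classical identity $f[x,\ldots,x]=\tfrac{1}{k!}f^{(k)}(x)$ for polynomial $f$, applied componentwise to $R$, the $(i,k)$-th row of $N'$ specializes exactly to the $(i,k)$-th row of $M^\bfs_\bfalpha$, whence $\det N'$ specializes to $V^\bfs_\bfalpha(x_0,\ldots,x_m)$; meanwhile the cross-cluster product specializes to $\prod_{i<i'}(x_{i'}-x_i)^{s_is_{i'}}$, and $P_\bfalpha(\mathbf y)$ specializes to a polynomial in $x_0,\ldots,x_m$ with non-negative integer coefficients. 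From this single identity, \textbf{(a)} follows by taking $\bfalpha=\bfst$ (whence $P_\bfst=1$), while \textbf{(b)} and \textbf{(d)} follow by defining $P^\bfs_\bfalpha$ as the specialization of $P_\bfalpha$ (substitution of repeated variables preserves non-negativity of coefficients). Part \textbf{(c)} I will handle separately by a direct weighted-homogeneity count on $V^\bfs_\bfalpha$: since each row $(i,k)$ contributes degree $\alpha_j-k$ in column $j$, every non-zero term of the determinant has total degree $|\bfalpha|-\sum_i s_i(s_i-1)/2$, and the degree of $P^\bfs_\bfalpha$ is then forced by (a). The one delicate point in the plan is the divided-difference row reduction together with the coincident-point evaluation $R[x_i,\ldots,x_i]=\tfrac{1}{k!}R^{(k)}(x_i)$ (equivalently, $h_{\alpha_j-k}(x_i,\ldots,x_i)=\binom{\alpha_j}{k}x_i^{\alpha_j-k}$ for the complete homogeneous symmetric polynomial $h$); both are standard algebraic identities, but they must be set up cleanly to match the specific entries of $M^\bfs_\bfalpha$, after which (a)--(d) fall out of the same computation.
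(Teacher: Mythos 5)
Your proof is correct and takes a genuinely different, though closely related, route. The paper proves all four items by an induction on the cluster-size vector $\bfs$: starting from the non-confluent case $\bfs=(1,\ldots,1)$ of Proposition~\ref{prop-vander}, it repeatedly merges a size-one cluster into an adjacent one via the identity
\[
V_\bfalpha^{\hat{\bfs}}(x_0,\ldots,x_m)=\left.\frac{V_\bfalpha^{\bar{\bfs}}(\ldots,x_k,x_k+\delta,\ldots)}{\delta^{s_k}}\right|_{\delta=0},
\]
established by explicit row operations on the two-block submatrix, and then propagates each of (a)--(d) along the induction. You instead collapse the whole induction into one step: introduce $t+1$ fresh indeterminates, invoke Proposition~\ref{prop-vander} once, row-reduce each cluster to its divided differences (a lower-triangular operation whose determinant is precisely the reciprocal of the intra-cluster Vandermonde factor, so the cancellation you want happens automatically), and then specialize coincident variables using $f[x,\ldots,x]=f^{(k)}(x)/k!$, equivalently $h_{\alpha_j-k}(x,\ldots,x)=\binom{\alpha_j}{k}x^{\alpha_j-k}$, to recover exactly the rows of $M^\bfs_\bfalpha$. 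Both arguments land on the same structural identity
\[
P_\bfalpha^\bfs(x_0,\ldots,x_m)=P_\bfalpha(\underbrace{x_0,\ldots,x_0}_{s_0},\ldots,\underbrace{x_m,\ldots,x_m}_{s_m}),
\]
which the paper records separately as its equation~(\ref{eq2}); your route makes it the centrepiece rather than a by-product. What the paper's approach buys is self-containedness --- it needs only elementary row operations and a single $\delta$-division, no external calculus of divided differences --- while yours is arguably tidier, replacing a chain of $\delta$-limits with one batch specialization and making (d) entirely transparent (substituting repeated variables into a polynomial with non-negative coefficients plainly preserves non-negativity). For item (c), your direct weighted-degree count on $V_\bfalpha^\bfs$ is a small variation of the paper's computation, which instead finds the degree of $V_\bfst^\bfs$ from (a) and deduces the rest from (b); both routes yield $|\bfalpha|-\sum_i s_i(s_i-1)/2$ and $|\bfalpha|-t(t+1)/2$ correctly. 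One small wording point: when you say the degree of $P^\bfs_\bfalpha$ is ``forced by (a)'', you are of course also using the factorization from (b); worth making explicit in the final write-up, but not a gap.
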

\begin{proof}
 Set
\begin{align*}\hat{\bfs}&=(s_0,\ldots,s_{k-1},s_k+1,s_{k+1},\ldots,s_m)\in\N^{m+1},\\
\bar{\bfs}&=(s_0,\ldots,s_{k-1},s_k,1,s_{k+1},\ldots,s_m)\in\N^{m+2}.\end{align*}
The proofs will be inductive, assuming the properties hold for
$\bar{\bfs}$ and proving them for $\hat{\bfs}$,   noting that the
case $\bfs=(1,1,\ldots,1)\in \N^{t+1}$ corresponds to
Proposition~\ref{prop-vander}. They are based on the following
identity of polynomials.
\begin{equation} \label{eq1}
V_\bfalpha^{\hat{\bfs}}(x_0,\ldots,x_m)=
  \left.\frac{V_\bfalpha^{\bar{\bfs}}(x_0,\ldots,x_k,x_k+\delta,x_{k+1},\ldots,x_m)}{\delta^{s_k}}\right|_{\delta=0}
\end{equation}
To prove Identity~(\ref{eq1}), we  perform row operations on
$M_\alpha^{\bar{\bfs}}(\ldots,x_k,x_k+\delta,\ldots)$. More
precisely, we will only operate on the subblock $A$ of this matrix
corresponding to $M_\bfalpha^{s_k}(x_k)$ and
$M_\bfalpha^{1}(x_k+\delta)$.
$$A\!=\!{\left(
    \begin{array}{ccc}
    & M^{s_k}_\bfalpha(x_k) &
  \\[2pt]
   \hline
  & M^{1}_\bfalpha(x_k+\delta) &
  \end{array}\right)}
  \!=\!  \left(\begin{array}{cccc}
1 & x_k^{\alpha_1} & \cdots & x_k^{\alpha_t} \\
0 & \alpha_1x_k^{\alpha_1-1} & \cdots & \alpha_tx_k^{\alpha_t-1} \\
\vdots & \vdots & & \vdots \\
0 & \binom{\alpha_1}{s_k-1}x_k^{\alpha_1-s_k+1} & \cdots & \binom{\alpha_t}{s_k-1}x_k^{\alpha_t-s_k+1} \\
\hline  1 & (x_k+\delta)^{\alpha_1} & \cdots &
(x_k+\delta)^{\alpha_t}
\end{array}\right)$$
Expanding the last row and subtracting the first $s_k$ rows
multiplied by $\delta^{i-1}$ from the last one, we get
$$B=\left(\begin{array}{cccc}
1 & x_k^{\alpha_1} & \cdots & x_k^{\alpha_t} \\
0 & \alpha_1x_k^{\alpha_1-1} & \cdots & \alpha_tx_k^{\alpha_t-1} \\
\vdots & \vdots & & \vdots \\
0 & \binom{\alpha_1}{s_k-1}x_k^{\alpha_1-s_k+1} & \cdots & \binom{\alpha_t}{s_k-1}x_k^{\alpha_t-s_k+1} \\
0 & \sum_{i\geq s_k}\binom{\alpha_1}{i}\delta^ix_k^{\alpha_1-i} &
\cdots & \sum_{i\geq s_k}\binom{\alpha_t}{i}\delta^ix_k^{\alpha_t-i}
\end{array}\right)$$
Now we compute the determinant
$V_\bfalpha^{\bar{\bfs}}(\ldots,x_k,x_k+\delta,\ldots)$ using the
block $B$ instead of $A$. The last row of $B$ shows that it is
divisible by $\delta^{\bfs_k}$. Moreover, dividing by $\delta^{s_k}$
and then specializing it into $\delta=0$ corresponds to  keeping
only the term in $\delta^{s_k}$ in the last row of $B$, thus
reducing to the determinant of the matrix
$M^{\hat{\bfs}}_\bfalpha(x_0,\ldots,x_m)$. This concludes the proof
of Identity~(\ref{eq1}).

\smallskip
\noindent {\bf (a)} See also \cite{Aitken}  or \cite[Thm.~2.4]{ChenLi}.

\noindent Assume it holds for $\bar{\bfs}$. Then
{\small
\begin{align*}
V_{\rm st}^{\bar{\bfs}}(\ldots,x_k,&x_k+\delta,\ldots)=\!\!\!\!\!\prod_{0\leq
i<j\le m}\!\!(x_j-x_i)^{s_is_j} \!\!\!\!\prod_{0\leq i\leq
k}\!\!(x_k+\delta-x_i)^{s_i} \!\!\!\!\prod_{k<j\leq
 m}\!\!(x_j-(x_k+\delta))^{s_j}\\
&=\delta^{s_k}\!\!\!\!\prod_{0\leq i<j\le m}\!\!(x_j-x_i)^{s_is_j}
\!\!\!\!\prod_{0\leq i< k}\!\!(x_k+\delta-x_i)^{s_i}
\!\!\!\!\prod_{k<j\leq
 m}\!\!(x_j-(x_k+\delta))^{s_j}
\end{align*}}
Therefore, by Identity~(\ref{eq1}),
\begin{align*}
V_{\bfst}^{\hat{\bfs}}(x_0, & \ldots ,x_m)=\!\!\!\!\prod_{0\leq
i<j\le m}\!\!(x_j-x_i)^{s_is_j}
 \!\!\!\!\prod_{0\leq i< k}\!\!(x_k-x_i)^{s_i}
 \!\!\!\!\prod_{k<j\leq m}\!\!(x_j-x_k)^{s_j}\\
 &=\!\!\!\!\prod_{0\leq
i<j\le m; \,i,j\ne k}\!\!(x_j-x_i)^{s_is_j}
 \!\!\!\!\prod_{0\leq i< k}\!\!(x_k-x_i)^{s_i(s_k+1)}
 \!\!\!\!\prod_{k<j\leq m}\!\!(x_j-x_k)^{s_j(s_k+1)},
\end{align*}
proving that it holds for $\hat{\bfs}$.

\smallskip
\noindent {\bf (b)}  Assume it holds for  $\bar{\bfs}$. Then, by
Identity~(\ref{eq1}) and the inductive hypothesis, we get
$$V_\bfalpha^{\hat{\bfs}}(x_0,\ldots,x_m)=\left.
\frac{V_{\rm
st}^{\bar{\bfs}}(\ldots,x_k,x_k+\delta,\ldots)P_\bfalpha^{\bar{\bfs}}(\ldots,x_k,x_k+\delta,\ldots)}
{\delta^{s_k}}\right|_{\delta=0}$$ which by the previous item and
Identity~(\ref{eq1}) again
 gives
$$V_\bfalpha^{\hat{\bfs}}(x_0,\ldots,x_m)=V_{\bfst}^{\hat{\bfs}}(x_0,\ldots,x_m)P_\bfalpha^{\bar{\bfs}}
(x_0,\ldots,x_k,x_k,\ldots,x_m).$$ We conclude by setting
$P_\bfalpha^{\hat{\bfs}}(x_0,\ldots,x_m)=P_\bfalpha^{\bar{\bfs}}(x_0,\ldots,x_k,x_k,\ldots,x_m)$,
which  belongs to $\Z[x_0,\ldots,x_m]$ since
$P_\bfalpha^{\bar{\bfs}}$ has integer coefficients.

\smallskip
\noindent {\bf (c)} The proof of item {\bf (b)} shows that the
polynomials $P^\bfs_\bfalpha$ are homogeneous and  of the same
degree, independent from $\bfs$, than the polynomial $P_\bfalpha$ of
Proposition~\ref{prop-vander}.  We compute the degree of
$V_{\bfst}^\bfs$ using item {\bf (a)}:
\begin{align*}\deg(V_{\bfst}^\bfs)&=\!\!\!\!\!\sum_{0\leq i<j\leq
m}\!\!\!\!s_is_j=
  \frac{1}{2}\left(|\bfs|^2-\sum_{i=0}^ms_i^2\right)=\!
  \frac{1}{2}\left(|\bfs|^2-|\bfs|-\sum_{i=0}^ms_i(s_i-1)\right)\\
  & = \frac{t(t+1)}{2} - \frac{1}{2}\sum_{i=0}^ms_i(s_i-1).\end{align*}
Therefore, by {\bf (b)},  $V_\bfalpha^\bfs$ is a homogeneous
polynomial and
$$\deg(V_{\bfalpha}^\bfs)=\deg(V_{\bfst}^\bfs)+\deg(P_\bfalpha^\bfs)=|\bfalpha|-\sum_{i=0}^ms_i(s_i-1)/2.$$

\noindent {\bf (d)} The proof of Item~{\bf (b)} also shows that if
we assume that the polynomial $P_\alpha^{\bar{\bfs}}$ has
non-negative coefficients, then $P_\alpha^{\hat{\bfs}}$ has
non-negative coefficients as well.
\end{proof}

At this point, we have all the ingredients to prove that
$B_\m(t,K)\le 2t$ for ordered fields.

\begin{proof}[Proof of Theorem~\ref{thm1} (second part)]
Suppose that $B_\m(t,K)>2t$. Then there exists a non-zero polynomial
$f=a_0+a_1x^{\alpha_1}+\cdots+a_tx^{\alpha_t}\in K[x]$  with
strictly more than $2t$  roots counted with multiplicities. Choose,
for some $m\ge 0$,  $m+1$ of these roots, different, say
$r_0,\dots,r_m$, all strictly positive or strictly negative
satisfying  that for some $s_i\le {\rm mult}(f;r_i)$, $s_0+\cdots
+s_m= t+1$ holds, and set $\bfs=(s_0,\dots,s_m)$. Note that since
char$(K)=0$, the equalities
$$f(r_i)=\dots = f^{(s_i-1)}(r_i)=0 \ \mbox{ for } 0\le i\le m$$
translate into the matrix identity
$$ M_\bfalpha^\bfs (r_0,\dots,r_m)\cdot \left( \begin{array} {c} a_0\\ \vdots \\ a_t
\end{array}\right) = \left(\begin{array}{c} 0\\ \vdots \\
0\end{array}\right).$$ This is because the $k$-th row of
$M^{s_i}_\bfalpha(r_i) $ times $(a_0, \dots ,a_t)^t$ equals
$\frac{f^{(k-1)}(r_i)}{(k-1)!}$.

\noindent Since $f\ne 0$, we conclude that $V_\bfalpha^\bfs(r_0,\dots,r_m)=0$.
However, by Proposition~\ref{prop-vander-mult}{\bf (d)},
$$V_\bfalpha^\bfs(r_0,\dots,r_m)=V_\bfst^\bfs (r_0,\dots, r_m)\,P_\bfalpha ^\bfs(r_0,\dots,r_m)\ne 0$$
since $V_\bfst ^\bfs(r_0,\dots, r_m)\ne 0$ and $P_\bfalpha^\bfs (r_0,\dots,r_m)$
is strictly positive or negative according to the sign of the $r_i$'s.
Contradiction!
\end{proof}

Note that the proof of Proposition~\ref{prop-vander-mult}{\bf (b)}
shows inductively that
\begin{equation}
P_\bfalpha^\bfs(x_0,\ldots,x_m)=
P_\bfalpha(\underbrace{x_0,\ldots,x_0}_{s_0},\underbrace{x_1,\ldots,x_1}_{s_1},\ldots,\underbrace{x_m,\ldots,x_m}_{s_m}).
\label{eq2}
\end{equation}
This observation is useful for the proof of next result, which will
be used in Section~\ref{sec-roots1}.
\begin{lemma}\label{formula-vander1}
Let $\bfalpha=(\alpha_1,\ldots,\alpha_t)$ with
$0<\alpha_1<\cdots<\alpha_t$ and let
$\bfs=(s_0,\ldots,s_m)\in\N^{m+1}$. Then
\begin{equation*}
P_\bfalpha^\bfs(1+x_0,\ldots,1+x_m)=\!\!\!\sum_{1\leq
\beta_1<\cdots<\beta_t}\!\!\!\det\left(\begin{array}{ccc}
\binom{\alpha_1}{\beta_1} & \cdots & \binom{\alpha_1}{\beta_t} \\
\vdots & & \vdots \\
\binom{\alpha_t}{\beta_1} & \cdots & \binom{\alpha_t}{\beta_t}
\end{array}\right)P_\bfbeta^\bfs(x_0,\ldots,x_m)
\end{equation*}
where $\bfbeta=(\beta_1,\ldots,\beta_t)$. The same formula holds
when replacing $P$ by $V$.
\end{lemma}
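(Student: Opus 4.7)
The plan is to first handle the case $\bfs=\bfone$ (where $m=t$ and all $s_i=1$), then reduce the general case to it via Equation~(\ref{eq2}), and finally pass from the $P$-version to the $V$-version by translation invariance of $V_\bfst^\bfs$.

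For $\bfs=\bfone$, the key identity is $(1+x_i)^{\alpha_j}=\sum_{k\ge 0}\binom{\alpha_j}{k}x_i^k$, which exhibits $M_\bfalpha(1+x_0,\ldots,1+x_t)$ as a matrix product $W\cdot C$, where $W=(x_i^k)_{0\le i\le t,\,0\le k\le\alpha_t}$ is a rectangular Vandermonde matrix of size $(t+1)\times(\alpha_t+1)$ and $C$ is the $(\alpha_t+1)\times(t+1)$ matrix with $C_{k,0}=\delta_{k,0}$ and $C_{k,j}=\binom{\alpha_j}{k}$ for $j\ge 1$. Applying the Cauchy-Binet formula (equivalently, multilinearity of the determinant in the columns of $M_\bfalpha$), $\det(WC)$ becomes a sum over $(t+1)$-element subsets $S=\{k_0<\cdots<k_t\}\subseteq\{0,\ldots,\alpha_t\}$. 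The first column of $C$ vanishes below row $0$, so $k_0=0$ in every non-vanishing term; writing $\bfbeta:=(k_1,\ldots,k_t)$ with $1\le\beta_1<\cdots<\beta_t$, Laplace expansion of the $C$-minor along its first column gives $\det\bigl(\binom{\alpha_j}{\beta_i}\bigr)_{i,j}$ (equal to the matrix in the statement up to transposition), while the $W$-minor is $V_\bfbeta(x_0,\ldots,x_t)$. This proves the formula for $V_\bfalpha$ in the case $\bfs=\bfone$.

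The $P$-version follows by dividing both sides by the classical Vandermonde $V_\bfst(x_0,\ldots,x_t)=\prod_{i<j}(x_j-x_i)$, which is translation invariant and hence coincides with $V_\bfst(1+x_0,\ldots,1+x_t)$; Proposition~\ref{prop-vander}(b) then turns each $V$ into its $P$ counterpart. For general $\bfs$ with $|\bfs|=t+1$, I specialize this $P$-identity at the point where the $t+1$ variables are grouped into blocks of sizes $s_0,\ldots,s_m$: by~(\ref{eq2}), the specialization sends every $P_\bfbeta(\cdots)$ to $P_\bfbeta^\bfs(x_0,\ldots,x_m)$, while the scalar coefficients are inert. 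Finally, multiplying back by $V_\bfst^\bfs(x_0,\ldots,x_m)$ and invoking once more translation invariance (Proposition~\ref{prop-vander-mult}(a), since each factor $(1+x_j)-(1+x_i)=x_j-x_i$) delivers the $V$-version.

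The only delicate point is the Cauchy-Binet step: one must recognize that the all-ones first column of $M_\bfalpha(1+x_0,\ldots,1+x_t)$ forces $0\in S$ in every non-vanishing minor, and take care to transpose the binomial minor so it matches the form $\bigl(\binom{\alpha_i}{\beta_j}\bigr)_{i,j}$ in the statement. Everything after is purely formal, relying only on translation invariance of $V_\bfst^\bfs$ and on Equation~(\ref{eq2}).
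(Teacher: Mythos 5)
Your proof is correct and follows the same overall strategy as the paper: establish the $V$-identity in the base case $\bfs=\bfone$, divide by the translation-invariant $V_\bfst$ to obtain the $P$-version, specialize via Equation~(\ref{eq2}) to pass to $P_\bfalpha^\bfs$, and multiply back by $V_\bfst^\bfs$ (using Proposition~\ref{prop-vander-mult}(a)) for the $V^\bfs$-version. The only stylistic difference is in the base case, where you invoke Cauchy--Binet on the factorization $M_\bfalpha(1+x_0,\ldots,1+x_t)=WC$; this packages into a single step the paper's direct multilinear expansion of the determinant followed by antisymmetrization over sorted $\bfbeta$, and your observation that the all-ones column forces $0\in S$ plays exactly the role of the restriction $\beta_i\geq 1$ in the paper.
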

\begin{proof} First we prove the identity  for $V_\bfalpha$.
\begin{align*}V_{\bfalpha}(1+x_0,\ldots,1&+x_t)=\det\left(
  \begin{array}{cccc}
    1 & (1+x_0)^{\alpha_1} & \cdots & (1+x_0)^{\alpha_t} \\
    1 & (1+x_1)^{\alpha_1} & \cdots & (1+x_1)^{\alpha_t} \\
    \vdots & \vdots & & \vdots \\
    1 & (1+x_t)^{\alpha_1} & \cdots & (1+x_t)^{\alpha_t}
  \end{array}\right)\\ =&\det\left(
  \begin{array}{cccc}
   1 & \sum_{\beta_1\geq 0}\binom{\alpha_1}{\beta_1}x_0^{\beta_1} & \cdots & \sum_{\beta_t\geq 0}\binom{\alpha_t}{\beta_t}x_0^{\beta_t} \\
   1 & \sum_{\beta_1\geq 0}\binom{\alpha_1}{\beta_1}x_1^{\beta_1} & \cdots & \sum_{\beta_t\geq 0}\binom{\alpha_t}{\beta_t}x_1^{\beta_t} \\
   \vdots & \vdots & & \vdots \\
   1 & \sum_{\beta_1\geq 0}\binom{\alpha_1}{\beta_1}x_t^{\beta_1} & \cdots & \sum_{\beta_t\geq 0}\binom{\alpha_t}{\beta_t}x_t^{\beta_t}
  \end{array}\right)\\
=&\sum_{\beta_1,\ldots,\beta_t\geq
1}\binom{\alpha_1}{\beta_1}\cdots\binom{\alpha_t}{\beta_t}\det\left(
  \begin{array}{cccc}
   1 & x_0^{\beta_1} & \cdots & x_0^{\beta_t} \\
   1 & x_1^{\beta_1} & \cdots & x_1^{\beta_t} \\
   \vdots & \vdots & & \vdots \\
   1 & x_t^{\beta_1} & \cdots & x_t^{\beta_t}
  \end{array}\right)
\end{align*}

\noindent Reducing our sum to $\beta_1,\ldots,\beta_t$ pairwise different, and
using the definition of determinant in the last line, we get
{\small
\begin{align*}V_{\bfalpha}(1+x_0,&\ldots,1+x_t) \ = \ \\
=&
\sum_{\genfrac{}{}{0pt}{}{1\leq\beta_1<\cdots<\beta_t}{\sigma\in{\rm
Perm}\{1,\ldots,t\}}}
  \binom{\alpha_1}{\beta_{\sigma(1)}}\cdots\binom{\alpha_t}{\beta_{\sigma(t)}}\det\left(
  \begin{array}{cccc}
   1 & x_0^{\beta_{\sigma(1)}} & \cdots & x_0^{\beta_{\sigma(t)}} \\
   1 & x_1^{\beta_{\sigma(1)}} & \cdots & x_1^{\beta_{\sigma(t)}} \\
   \vdots & \vdots & & \vdots \\
   1 & x_t^{\beta_{\sigma(1)}} & \cdots & x_t^{\beta_{\sigma(t)}}
  \end{array}\right)\\
=&\sum_{\genfrac{}{}{0pt}{}{1\leq\beta_1<\cdots<\beta_t}{\sigma\in{\rm
Perm}\{1,\ldots,t\}}}
  (-1)^{|\sigma|}\binom{\alpha_1}{\beta_{\sigma(1)}}\cdots\binom{\alpha_t}{\beta_{\sigma(t)}}\det\left(
  \begin{array}{cccc}
   1 & x_0^{\beta_1} & \cdots & x_0^{\beta_t} \\
   1 & x_1^{\beta_1} & \cdots & x_1^{\beta_t} \\
   \vdots & \vdots & & \vdots \\
   1 & x_t^{\beta_1} & \cdots & x_t^{\beta_t}
  \end{array}\right)\\
 =&\sum_{1\leq \beta_1<\cdots<\beta_t}\!\!\!\det\left(\begin{array}{ccc}
\binom{\alpha_1}{\beta_1} & \cdots & \binom{\alpha_1}{\beta_t} \\
\vdots & & \vdots \\
\binom{\alpha_t}{\beta_1} & \cdots & \binom{\alpha_t}{\beta_t}
\end{array}\right)V_\bfbeta(x_0,\ldots,x_t).
\end{align*}}

\noindent Now note that by Proposition~\ref{prop-vander-mult}{\bf (a)},
$$V_{\bfst}^\bfs(1+x_0,\ldots,1+x_m)=V_{\bfst}^\bfs(x_0,\ldots,x_m).$$
Therefore, the identity holds for $P_\bfalpha$ by
Proposition~\ref{prop-vander}{\bf (b)}. Next, Identity~(\ref{eq2})
implies that the identity holds for $P_\bfalpha^\bfs$, and finally
the identity for $V_\bfalpha^\bfs$ follows from
Proposition~\ref{prop-vander-mult}{\bf (b)}.
\end{proof}

Lemma~\ref{formula-vander1} motivates the need of working with
determinants of matrices whose terms are binomial coefficients. The
following notation and results show that they share many properties
with the generalized Vandermonde determinants.

\begin{notation}\label{def-binom}
Let $\bfbeta=(\beta_1,\ldots,\beta_t)\in\Z_{\geq 0}^t$. We set
$$W_\bfbeta(x_1,\dots,x_t)=\det\left(\begin{array}{ccc}
\binom{x_1}{\beta_1} & \cdots & \binom{x_1}{\beta_t} \\
\vdots & & \vdots \\
\binom{x_t}{\beta_1} & \cdots & \binom{x_t}{\beta_t}
\end{array}\right)\ \in\ \Q[x_1,\ldots,x_t]$$
where $\binom{x}{\beta}=x(x-1)\cdots(x-\beta+1)/\beta!$.
\end{notation}

\begin{lemma}\label{prop-binom}
Let $1\leq\beta_1<\cdots<\beta_t$ and let ${\bfst}=(1,2,\ldots,t)$.
Then
\begin{description}
\item[(a)] $\beta_1!\cdots\beta_t!\,W_\beta(x_1,\ldots,x_t)\ \in\ \Z[x_1,\ldots,x_t]$.
\item[(b)] $\displaystyle{1!2!\cdots t!\,W_{\bfst}(x_1,\ldots,x_t)=x_1\cdots x_t\prod_{1\leq i<j\leq t}(x_j-x_i)}$.
\item[(c)] $\beta_1!\cdots\beta_t!\,W_\bfbeta=1!2!\cdots t!\,W_{\bfst}\,Q_\bfbeta$ for some non-zero
$Q_\bfbeta \in\Z[x_1,\ldots,x_t]$.
\item[(d)] $\deg(W_\bfbeta)=|\bfbeta|$ and $\deg(Q_\bfbeta)=|\bfbeta|-t(t+1)/2$.
\end{description}
\end{lemma}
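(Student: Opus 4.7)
The plan is to handle parts (a)--(d) in order, with the strategy closely paralleling the proof of Proposition~\ref{prop-vander} for generalized Vandermonde determinants, but with falling factorials $(x_i)_j := x_i(x_i-1)\cdots(x_i-j+1)$ playing the role of monomials.

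For part (a), I would clear denominators column by column: each entry $\binom{x_i}{\beta_j}$ lies in $\frac{1}{\beta_j!}\Z[x_i]$, so multiplying column $j$ by $\beta_j!$ produces an integer matrix, and multilinearity of the determinant gives $\beta_1!\cdots\beta_t!\,W_\bfbeta \in \Z[x_1,\ldots,x_t]$. For part (b), the same clearing turns the matrix into $((x_i)_j)_{1\le i,j\le t}$. Since $(x_i)_j = x_i^j$ plus a $\Z$-linear combination of the $(x_i)_k$ with $k<j$, a sequence of column operations (preserving the determinant) reduces the matrix to $(x_i^j)_{1\le i,j\le t}$. Factoring $x_i$ out of row $i$ exhibits a standard Vandermonde matrix in $(1,x_i,\ldots,x_i^{t-1})$, whose determinant is $\prod_{1\le i<j\le t}(x_j-x_i)$, giving the claimed formula.

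For part (c), I would observe two vanishing properties of $W_\bfbeta$: it is zero on the hyperplane $x_i=0$ (because $\binom{0}{\beta_j}=0$ for every $\beta_j\ge 1$, so row $i$ becomes zero) and on $x_i=x_{i'}$ for $i\ne i'$ (rows $i$ and $i'$ coincide). Since the linear factors $x_1,\ldots,x_t$ and $x_j-x_i$ ($1\le i<j\le t$) are pairwise coprime irreducibles in $\Q[x_1,\ldots,x_t]$, their product divides $\beta_1!\cdots\beta_t!\,W_\bfbeta$ there; by Gauss's lemma applied to the primitive divisor, the quotient $Q_\bfbeta$ lies in $\Z[x_1,\ldots,x_t]$, and combining with part (b) yields the stated identity. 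To see $Q_\bfbeta\ne 0$, I would extract the coefficient of $x_1^{\beta_1}\cdots x_t^{\beta_t}$ in the Leibniz expansion: the $(i,\sigma(i))$-entry $\binom{x_i}{\beta_{\sigma(i)}}$ contributes top term $x_i^{\beta_{\sigma(i)}}/\beta_{\sigma(i)}!$, and since the $\beta_j$ are strictly increasing, only $\sigma=\mathrm{id}$ yields the target monomial, with nonzero coefficient $1/(\beta_1!\cdots\beta_t!)$.

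For part (d), each permutation term $\pm\prod_i \binom{x_i}{\beta_{\sigma(i)}}$ in the Leibniz expansion of $W_\bfbeta$ is a polynomial of total degree at most $\sum_i\beta_{\sigma(i)} = |\bfbeta|$, so $\deg(W_\bfbeta)\le |\bfbeta|$; the nonzero leading monomial identified above attains this bound, hence $\deg(W_\bfbeta)=|\bfbeta|$. The degree of $Q_\bfbeta$ then follows from part (c) together with $\deg(W_\bfst)=1+2+\cdots+t=t(t+1)/2$. The only mildly subtle step in the argument is the Gauss-lemma passage ensuring integrality of $Q_\bfbeta$ in (c); the rest is routine linear algebra and a monomial count.
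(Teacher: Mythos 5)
Your proposal is correct and follows essentially the same route as the paper's proof: clear factorials column-by-column for (a), reduce to the standard Vandermonde via unipotent column operations after extracting $x_i$ from each row for (b), argue divisibility by the monic coprime linear factors and invoke integrality of the quotient for (c), and pin down the leading monomial $x_1^{\beta_1}\cdots x_t^{\beta_t}$ for (d). Two small remarks: you correctly observe that setting $x_i=0$ (resp.\ $x_i=x_{i'}$) annihilates the $i$-th \emph{row} (resp.\ makes two \emph{rows} coincide), whereas the paper's text misstates these as columns; and your explicit Gauss-lemma justification for the integrality of $Q_\bfbeta$, as well as the Leibniz-expansion argument that $Q_\bfbeta\ne 0$, make precise what the paper leaves somewhat terse.
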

\begin{proof}
{\bf (a)} Multiply the $j$-th column of the matrix by $\beta_j!$.

\smallskip
\noindent {\bf (b)} We need to prove that
$$\det\left(\begin{array}{ccc}
\binom{x_1}{1} & \cdots & \binom{x_1}{t} \\
\vdots & & \vdots \\
\binom{x_t}{1} & \cdots & \binom{x_t}{t}
\end{array}\right)=\frac{x_1\cdots x_t\,\prod_{1\leq i<j\leq t}(x_j-x_i)}{1!2!\cdots t!}.$$
Taking $x_i$ as a common factor in the $i$-th row and $1/j!$ as a
common factor in the $j$-th column, this determinant equals
$$\frac{x_1\cdots x_t}{1!2!\cdots t!}\det\!\!\left(\!\!
\begin{array}{ccccc}
1 & x_1-1 & (x_1-1)(x_1-2) & \cdots & (x_1-1)\cdots(x_1-t+1) \\
1 & x_2-1 & (x_2-1)(x_2-2) &\cdots & (x_2-1)\cdots(x_2-t+1) \\
\vdots & \vdots & \vdots & & \vdots \\
1 & x_t-1 & (x_t-1)(x_t-2) & \cdots & (x_t-1)\cdots(x_t-t+1)
\end{array}
\!\!\right)\!\!.$$
It is clear that adding the first to the
second column, we get $(x_1,\ldots,x_t)^t$ in the second column. Next,
adding a combination of the first and the new second column to the
third, we get $(x_1^2,\ldots,x_t^2)^t$ in the third column, etc.
Therefore our determinant equals
$$\frac{x_1\cdots x_t}{1!2!\cdots t!}\det\left(\!\!
\begin{array}{ccccc}
1 & x_1 & x_1^2 & \cdots & x_1^{t-1} \\
1 & x_2 &x_2^2 & \cdots & x_2^{t-1} \\
\vdots & \vdots & \vdots & & \vdots \\
1 & x_t & x_t^2 & \cdots & x_t^{t-1}
\end{array}
\right)$$ which shows the statement.

\smallskip
\noindent {\bf (c)} The polynomial $W_\bfbeta(x_1,\ldots,x_t)$ is
divisible by $x_1\cdots x_t$, since setting $x_i=0$ in the matrix
that defines it yields a column of  zeros. Similarly, it is
divisible by the binomials $x_j-x_i$, since setting $x_i=x_j$ would
produce two identical columns. Since the polynomial
$\beta_1!\cdots\beta_t!W_\bfbeta \in \Z[x_1,\ldots,x_t]$ of item~{\bf (a)}
is divisible by all these coprime and monic factors,  the quotient
$Q_\bfbeta$ has integer coefficients.

\smallskip
\noindent {\bf (d)}  Since the degree of the
$j$-th column of the matrix defining $W_\bfbeta$ equals $\beta_j$, then
$\deg(W_\bfbeta)\leq |\bfbeta|$. Moreover, a simple inspection shows
that the monomial $x_1^{\beta_1}\cdots x_t^{\beta_t}$ can not be
canceled. This means that $\deg(W_\bfbeta)=|\bfbeta|$, and therefore,
by item~{\bf (c)}, we conclude $\deg(Q_\bfbeta)=|\bfbeta|-t(t+1)/2$.
\end{proof}

\begin{observation}\label{obsW} When $\bfalpha=(\alpha_1,\dots,\alpha_t) \in
\Z_{\ge 0}^t$, then $W_\bfbeta(\bfalpha) \in \Z$, since in this
case, all the entries of the matrix defining it are integer numbers.
\end{observation}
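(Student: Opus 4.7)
The plan is to unpack the definition of $W_\bfbeta$ from Notation~\ref{def-binom} and substitute the tuple $\bfalpha=(\alpha_1,\dots,\alpha_t)\in\Z_{\ge 0}^t$ for the variables $(x_1,\dots,x_t)$. After substitution, each entry of the defining matrix becomes $\binom{\alpha_i}{\beta_j}=\alpha_i(\alpha_i-1)\cdots(\alpha_i-\beta_j+1)/\beta_j!$. The first step is to observe that for non-negative integers $\alpha_i$ and $\beta_j$, the value $\binom{\alpha_i}{\beta_j}$ is the usual combinatorial binomial coefficient, which is a non-negative integer (it equals $0$ when $\beta_j>\alpha_i$, and counts subsets otherwise).

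Once every entry of the matrix is known to lie in $\Z$, the conclusion is immediate: the determinant of a matrix with integer entries is an integer by the Leibniz formula (an alternating sum of products of matrix entries), so $W_\bfbeta(\bfalpha)\in\Z$. There is no real obstacle here; the observation is essentially a bookkeeping remark justifying later integrality arguments, and the only potentially subtle point is to make explicit that the rational polynomial $\binom{x}{\beta}\in\Q[x]$ happens to take integer values on $\Z_{\ge 0}$, which is precisely the well-known fact that binomial coefficients of non-negative integers are integers.
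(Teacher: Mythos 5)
Your proposal matches the paper's reasoning exactly: the observation's proof is the parenthetical remark that every entry $\binom{\alpha_i}{\beta_j}$ is an integer when $\alpha_i\in\Z_{\ge 0}$, whence the determinant is an integer. You have simply spelled out the two standard facts (integrality of binomial coefficients at non-negative integers, and integrality of determinants of integer matrices) that the paper leaves implicit.
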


\section{Local fields}\label{sec-roots1}

Throughout this section we assume that $L$ is a local field of
characteristic zero  with respect to the non-archimedean valuation
$v:L\to\R\cup\{\infty\}$. The ring of integers~$A=\{x\in
L\,:\,v(x)\geq 0\}$ is a local ring with maximal ideal $\M=\{x\in
L\,:\,v(x)>0\}$. The residue field of $L$ is the quotient $K=A/\M$.

\begin{defn}\label{def2}
Let $t\geq 0$. We denote by $D_1(t,L)$ and $D_m(t,L)$ the supremum
of the number of roots in $1+\M$, counted without/with
multiplicities respectively, that can have a non-zero polynomial in
$L[x]$ with at most $t+1$ non-zero terms.
\end{defn}

Next example, which shows that the bound $D_1(t,L)=\infty$ can be
reached, was suggested to us by the referee: take
$L=\overline{\Q_p}$ and $f=x^{p^n}-1$. The set of solutions of this
polynomial is  the cyclic group of order $p^n$, which is indeed
contained in  $1+\M$, since by Fermat's theorem, $r^{p-1}\in 1+\M$
for such a root, and $p-1$ is prime to $p$.

\smallskip
Note that $D_1(t,L)\leq D_\m(t,L)\leq t\,D_1(t,L)$, since in
characteristic zero the roots of a polynomial with $t+1$ terms can
not have multiplicity greater than $t$.

\begin{prop}\label{lem3}
Let $L$ be a field with a valuation $v:L\to\R\cup\{\infty\}$, with
residue field $K$. Then $$B_1(t,L)\leq t\,B_1(t,K)\,D_1(t,L) \
\mbox{ and } \ B_\m(t,L)\leq t\,B_1(t,K)\,D_\m(t,L).$$
\end{prop}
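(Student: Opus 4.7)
The plan is to partition the roots of a $(t+1)$-nomial $f\in L[x]$ first according to their valuation, then, within each valuation class, according to their reduction modulo $\M$ after rescaling. A Newton polygon argument gives at most $t$ valuation classes, the reduction step contributes the factor $B_1(t,K)$, and the innermost layer (roots congruent to a fixed unit) is bounded by $D_1(t,L)$ or $D_\m(t,L)$.

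More precisely, write $f=a_0+a_1x^{\alpha_1}+\cdots+a_tx^{\alpha_t}$. The Newton polygon of $f$ with respect to $v$ consists of at most $t$ segments, so the set of valuations $v(r)$ as $r$ runs over the roots of $f$ in $L^*$ has cardinality at most $t$. Fix such a valuation $v_0$ and a specific root $r_0\in L^*$ with $v(r_0)=v_0$. The polynomial $g(x)=f(r_0 x)\in L[x]$ still has at most $t+1$ non-zero terms, and the map $r\mapsto r/r_0$ is a bijection (preserving multiplicity of roots) between the roots of $f$ of valuation $v_0$ and the roots of $g$ of valuation $0$, i.e.\ the roots of $g$ in $A^\ast$.

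Next, normalize $g$ by multiplying by a suitable element of $L^\ast$ so that its coefficients lie in $A$ with at least one of them a unit. The reduction $\bar g\in K[x]$ is then a non-zero polynomial with at most $t+1$ terms, and every root of $g$ in $A^\ast$ reduces to a root of $\bar g$ in $K^\ast$. Hence the roots of $g$ in $A^\ast$ are distributed among at most $B_1(t,K)$ reduction classes $\zeta\in K^\ast$. For each such $\zeta$, pick any lift $\tilde\zeta\in A^\ast$ and form $h(x)=g(\tilde\zeta x)\in L[x]$; again $h$ has at most $t+1$ terms, and the roots of $g$ lying in the coset $\tilde\zeta(1+\M)$ correspond bijectively, with multiplicities, to the roots of $h$ in $1+\M$. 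Thus there are at most $D_1(t,L)$ distinct such roots, and at most $D_\m(t,L)$ counted with multiplicities.

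Multiplying the three bounds gives at most $t\cdot B_1(t,K)\cdot D_1(t,L)$ distinct roots of $f$ in $L^\ast$, and at most $t\cdot B_1(t,K)\cdot D_\m(t,L)$ roots counted with multiplicity, as claimed. The main subtleties to verify are that multiplicities behave well under the substitutions $x\mapsto r_0 x$ and $x\mapsto\tilde\zeta x$ (which is clear, these being $L$-linear changes of variable) and that in the multiplicity bound only $B_1(t,K)$ appears, not $B_\m(t,K)$: this is because we use the reduction merely to group roots of $g$ by their class in $K^\ast$, while all the internal multiplicity within a fixed class is already absorbed in $D_\m(t,L)$.
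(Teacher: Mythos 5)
Your proof is correct and follows exactly the same three-layer decomposition (Newton-polygon slopes, then residue class after rescaling, then the $1+\M$ coset) used in the paper. The only cosmetic difference is that you write the coset as $\tilde\zeta(1+\M)$ where the paper writes $r_i+\M$, but these coincide since $\tilde\zeta$ is a unit.
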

\begin{proof}
Let $f\in L[x]$ be a non-zero polynomial with at most $t+1$ terms.
The theory of Newton polygons (see~\cite[Prop.~3.1.1]{Wei}) shows
that the set $V=\{v(r)\,:\, f(r)=0, \ r\in L^\ast\}$ corresponds to
slopes of the segments of the Newton polygon $NP(f)$ of $f$ and thus
has at most $t$ elements. Take $v\in V$ and let $r_0\in L^\ast$ such
that $v(r_0)=v$. Every root $r$ of $f$ with   $v(r)=v$ corresponds
to the root $r/r_0$ of $g(x):=f(x\,r_0)$ with $v(r/r_0)=0$, with the
same multiplicity.

\smallskip
\noindent Therefore we only need to prove that $g$ has at most
$B_1(t,K)D_1(t,L)$ (resp. $B_1(t,K)D_\m(t,L)$) roots with valuation
zero counted without (resp. with) multiplicities.

\smallskip
\noindent By dividing $g$ by its coefficient with minimum valuation,
we can assume, without loss of generality, that $g\in A[x]$ and that
not all coefficients of~$g$ belong to $\M$. Let $\bar{g}\in K[x]$ be
the non-zero polynomial obtained by reducing the coefficients of~$g$
modulo~$\M$. Then, by Definition~\ref{def1}, the set $W=\{\bar r\in
K^\ast\,:\,\bar{g}(\bar r)=0\}=\{\bar r_1,\dots, \bar r_m\}$ has $m\le B_1(t,K)$
elements, each of them represented by some $r_i\in A\setminus\M$.

\smallskip
\noindent  Each root $r\in L$ of $g$ with valuation zero belongs to
some coset $r_i+\M$,  and each root of  $g$ in $r_i+\M$ corresponds
to a root of $h_i(x):=g(x\,r_i)$ in $1+\M$. Since $h_i$ has at most
$D_1(t,L)$ (resp.~$D_\m(t,L)$) roots in $1+\M$ counted without
(resp.~with) multiplicities, then $g$ has at most $mD_1(t,L)$
(resp.~$mD_\m(t,L)$) roots in $L$ with valuation zero counted
without (resp.~with) multiplicites.
\end{proof}

Now we derive Theorem~\ref{thm2} as an immediate consequence
of  Lemma~\ref{lem3} above and Proposition~\ref{thm5} below.

\begin{prop}\label{thm5}
Let $L$ be a local field with a non-archimedean valuation
$v:L\to\R\cup\{\infty\}$ such that $v(n\cdot 1_L)=0$ for all
$n\in\Z\setminus\{0\}$. Then $D_1(t,L)\leq D_\m(t,L)\leq t$.
\end{prop}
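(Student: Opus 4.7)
The plan is to mimic the proof of Theorem~\ref{thm1} (second part), replacing the sign argument---unavailable here---with a valuation estimate that exploits both the hypothesis $v|_{\Z\setminus\{0\}}\equiv 0$ and the fact that the roots lie in $1+\M$.

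Suppose for contradiction that some nonzero $f=a_0+a_1x^{\alpha_1}+\cdots+a_tx^{\alpha_t}\in L[x]$ has strictly more than $t$ roots in $1+\M$ counted with multiplicities. Since $\operatorname{char}(L)=0$, I pick $m+1$ distinct roots $r_0,\ldots,r_m\in 1+\M$ and positive integers $s_i\le\operatorname{mult}(f;r_i)$ so that $\bfs=(s_0,\ldots,s_m)$ satisfies $|\bfs|=t+1$. As in the proof of Theorem~\ref{thm1}, the relations $f^{(k-1)}(r_i)=0$ for $1\le k\le s_i$ translate into the matrix identity $M_\bfalpha^\bfs(r_0,\ldots,r_m)(a_0,\ldots,a_t)^T=0$, and $f\ne 0$ forces $V_\bfalpha^\bfs(r_0,\ldots,r_m)=0$. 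By Proposition~\ref{prop-vander-mult}{\bf(a)},{\bf(b)} the factor $V_\bfst^\bfs(r_0,\ldots,r_m)=\prod_{i<j}(r_j-r_i)^{s_is_j}$ is nonzero (the $r_i$ are distinct), so it suffices to derive a contradiction from $P_\bfalpha^\bfs(r_0,\ldots,r_m)=0$.

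Writing $r_i=1+\rho_i$ with $\rho_i\in\M$ and applying Lemma~\ref{formula-vander1} gives
$$P_\bfalpha^\bfs(r_0,\ldots,r_m)=\sum_{1\le\beta_1<\cdots<\beta_t}W_\bfbeta(\alpha_1,\ldots,\alpha_t)\,P_\bfbeta^\bfs(\rho_0,\ldots,\rho_m).$$
The heart of the argument---and the only real obstacle---is to show that the $\bfbeta=\bfst$ contribution dominates in valuation. Since $|\bfst|=t(t+1)/2$, Proposition~\ref{prop-vander-mult}{\bf(c)} forces $P_\bfst^\bfs\equiv 1$, while Lemma~\ref{prop-binom}{\bf(b)} gives $W_\bfst(\alpha)=\alpha_1\cdots\alpha_t\prod_{i<j}(\alpha_j-\alpha_i)/(1!\,2!\cdots t!)$, a nonzero integer, so $v(W_\bfst(\alpha))=0$ by hypothesis.

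For every other $\bfbeta$ we have $|\bfbeta|>t(t+1)/2$, so by Proposition~\ref{prop-vander-mult}{\bf(b)},{\bf(c)} the polynomial $P_\bfbeta^\bfs$ is homogeneous of positive degree with integer coefficients; evaluating at $\rho_i\in\M$ thus lands in $\M$. Combined with $W_\bfbeta(\alpha)\in\Z$ from Observation~\ref{obsW} (hence of valuation $0$ or $\infty$), each non-$\bfst$ summand lies in $\M$. The ultrametric inequality then yields $v(P_\bfalpha^\bfs(r_0,\ldots,r_m))=0$, contradicting $P_\bfalpha^\bfs(r_0,\ldots,r_m)=0$. This proves $D_\m(t,L)\le t$, and the inequality $D_1(t,L)\le D_\m(t,L)$ is immediate from the definitions.
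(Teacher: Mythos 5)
Your proof is correct and follows essentially the same argument as the paper: reduce to showing $P_\bfalpha^\bfs(r_0,\ldots,r_m)\neq 0$, expand via Lemma~\ref{formula-vander1}, isolate the $\bfbeta=\bfst$ term (valuation zero since $W_\bfst(\bfalpha)$ is a non-zero integer and $v|_{\Z\setminus\{0\}}\equiv 0$), and show the remaining terms lie in $\M$ by the ultrametric inequality. The only cosmetic difference is that you justify $P_\bfst^\bfs\equiv 1$ by the degree count in Proposition~\ref{prop-vander-mult}\textbf{(c)} rather than appealing to it directly.
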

\begin{proof} The proof goes as the proof of Theorem~\ref{thm1}.
Let $A$ be the ring of integers of $L$ and let $\M$ be the maximal
ideal of $A$. As we pointed out in the introduction, the assumption
on $v$ implies that $L$ has characteristic zero. \\Suppose that
$D_\m(t,L)>t$. Then there exists a non-zero polynomial
$f=a_0+a_1x^{\alpha_1}+\cdots+a_tx^{\alpha_t}\in L[x]$  with
strictly more than $t$  roots in $1+\M$  counted with
multiplicities. Choose, for some $m\ge 0$,  $m+1$ of these roots,
different,  say $r_0,\dots,r_m$, satisfying  that for some $s_i\le
{\rm mult}(f;r_i)$, $s_0+\cdots +s_m= t+1$ holds, and set
$\bfs=(s_0,\dots,s_m)$. The equalities
$$f(r_i)=\dots = f^{(s_i-1)}(r_i)=0 \ \mbox{ for } 0\le i\le m$$
translate into the matrix identity
$$ M_\bfalpha^\bfs (r_0,\dots,r_m)\cdot \left( \begin{array} {c} a_0\\ \vdots \\ a_t
\end{array}\right) = \left(\begin{array}{c} 0\\ \vdots \\
0\end{array}\right).$$ Therefore, since $f\ne 0$, we conclude that
$V_\bfalpha^\bfs(r_0,\dots,r_m)=0$. This implies, by
Proposition~\ref{prop-vander-mult}{\bf (a-b)}, that
$P_\bfalpha^\bfs(r_0,\dots,r_m) =0$. Write $r_i=1+ x_i$ with $x_i\in
\M$ for $0\le i\le m$. Then, applying Lemma~\ref{formula-vander1}
and using Notation~\ref{def-binom},
$$P_\bfalpha^\bfs(1+x_0,\dots,1+x_m)= \!\!\!\sum_{1\leq
\beta_1<\cdots<\beta_t}W_\bfbeta (\bfalpha)
\,P_\bfbeta^\bfs(x_0,\ldots,x_m).$$ Let us show that  the term
corresponding to $\bfbeta=\bfst=(1,2,\ldots,t)$ in the right-hand
side is a non-zero integer:  $W_\bfst(\bfalpha) \in \Z$ by
Observation~\ref{obsW}, and is non-zero by
Proposition~\ref{prop-binom}{\bf (b)} since $r_i\ne r_j$; also
$P_{\bfst}^{\bfs}=1_L$ by definition. Therefore by assumption it has
valuation zero. The remaining non-zero terms have positive valuation
since in that case $W_\bfbeta (\bfalpha)$ is a non-zero integer
number, and $P_{\bfbeta}^\bfs(x_0,\ldots,x_s)$ has positive
valuation since $v(x_i)>0$ and $P_\bfbeta^\bfs$ is, according to
Proposition~\ref{prop-vander-mult}{\bf (b-c)}, a homogeneous
polynomial of positive degree with integer coefficients. Therefore
$v(P_\bfalpha^\bfs(r_0,\ldots,r_s))=0$ which implies
$P_\bfalpha^\bfs(r_0,\ldots,r_s)$ is a unit in $A$, and in particular $\ne 0$.
This contradicts the assumption $D_\m(t,L)>t$.
\end{proof}

\begin{proof}[Proof of Theorem~\ref{thm2}]
\begin{align*}B_1(t,L)& \leq \, t\,B_1(t,K)\,D_1(t,L) \qquad \mbox{by
Lemma~\ref{lem3}}\\
&\le \, t^2\,B_1(t,K)\qquad \mbox{by
Proposition~\ref{thm5}}.\end{align*} The same proof holds for
$B_\m(t,L)$.
\end{proof}

Our next aim is to prove  Theorem~\ref{thm3}. We do it following
the same lines of the proof of Theorem~\ref{thm2}, i.e. proving
first in Proposition~\ref{thm9}  below that $D_1(t,L)\leq D_\m(t,L)\leq t$ using
Lemma~\ref{formula-vander1} (which will require the extra
assumption $p>e+t$), and then using Proposition~\ref{lem6} below, that improves
Proposition~\ref{lem3} (if we used  Proposition~\ref{lem3}  we would conclude
that $B_1(t,L)\leq B_\m(t,L)\leq t^2(q-1)$ instead).

\smallskip
\medskip
In what follows, $K$ is assumed to be a  finite extension of $\Qp$
for an odd prime number $p$, with ramification index~$e$ and residue
class degree~$f$, $A$ is its ring of integers and $\M$ its maximal
ideal. The valuation $v:K\to\R\cup\{\infty\}$ of $K$ extends the
standard $p$-adic valuation $v_p$ of $\Qp$. It satisfies $v(p)=1$
and its group of values is $v(K^\times)=\frac{1}{e}\Z$. The ideal
$\M$ of $A$ is principal, generated by an element $\pi\in A$ with
valuation $v(\pi)=1/e$. The residue field $\Fq\approx A/\M$ is a
finite field of cardinality $q=p^f$. We finally define the ``first
digit" of any $x\in K^*$ to be  the first digit in its expansion,
i.e. corresponding to $\overline{\pi^{-ev(x)}x} \ \in A/\M$.

\smallskip
We will need the following lemma, which actual proof, simpler than
our previous one, was suggested by the referee.
\begin{lemma}\label{raicesunidad}
Assume that $p-1\nmid e$. Then $1$ is the only $p$-th root of unity
in $K$.
\end{lemma}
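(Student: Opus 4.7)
The plan is to argue by contradiction, reducing the question to the well-known ramification behavior of the cyclotomic extension $\Qp(\zeta_p)/\Qp$.

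Suppose, for contradiction, that there exists a $p$-th root of unity $\zeta \in K$ with $\zeta \neq 1$. Then $\zeta$ is a primitive $p$-th root of unity (since $p$ is prime), so $\zeta$ is a root of the $p$-th cyclotomic polynomial $\Phi_p(x)=x^{p-1}+x^{p-2}+\cdots+1$, which is irreducible over $\Qp$. In particular, the subfield $\Qp(\zeta)\subseteq K$ is isomorphic to $\Qp(\zeta_p)$ and has degree $p-1$ over $\Qp$.

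The key classical fact I would invoke is that the extension $\Qp(\zeta_p)/\Qp$ is totally ramified of degree $p-1$, with uniformizer $\zeta_p-1$ (since $\Phi_p(1)=p$, the minimal polynomial of $\zeta_p-1$ over $\Qp$ is Eisenstein). Hence its ramification index over $\Qp$ equals $p-1$. By multiplicativity of ramification indices in the tower $\Qp\subseteq\Qp(\zeta_p)\subseteq K$, we get $(p-1)\mid e$, which contradicts the hypothesis $p-1\nmid e$. Therefore no such $\zeta\neq 1$ exists.

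The main (only) obstacle is really just citing, or briefly recalling, the fact that $\Qp(\zeta_p)/\Qp$ is totally ramified of degree $p-1$; this is standard (e.g. via the Eisenstein polynomial $\Phi_p(x+1)$) and nothing else in the proof is delicate. The whole argument is therefore a couple of lines once that fact is in hand.
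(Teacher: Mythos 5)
Your proof is correct and follows essentially the same approach as the paper's: both argue via the total ramification of $\Qp(\zeta_p)/\Qp$ in degree $p-1$ and the multiplicativity of ramification indices in the tower $\Qp\subseteq\Qp(\zeta_p)\subseteq K$. You simply spell out slightly more (the Eisenstein polynomial $\Phi_p(x+1)$) where the paper cites a reference.
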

\begin{proof}
Let $\xi_p$  be a primitive $p$-root of unity. The prime $p$ is
totally ramified in $\Q(\xi_p)$, see e.g. \cite{marcus}, and
therefore the extension $\Q_p(\xi_p)/\Q_p$ has degree $p-1$. If
$K/\Q_p$ is a finite extension such that $\xi_p\in K$, we have
$\Q_p(\xi_p)\subset K$ and by the multiplicativity of the
ramification degree, $p-1\mid e$.

\end{proof}


We also need Hensel's lemma in its Newton method version, see~\cite[Prop.~3.1.2]{Wei}:

\begin{lemma}[Newton's method]\label{Hensel}
Let $K$ be a complete field with respect to a discrete
non-archimedian valuation~$v$ and let $A$ be its valuation ring. Let
$f\in A[x]$ be a non-zero polynomial and let $r_0\in A$ be such that
$v(f(r_0))>2v(f'(r_0))$. Then, there exists a unique $r\in A$ such
that $f(r)=0$ and $v( r-r_0)\ge v(f(r_0))-v(f'(r_0))> v(f'(r_0))$.
\end{lemma}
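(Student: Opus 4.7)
The plan is to use the classical Newton iteration, leveraging completeness of $K$. Define a sequence $(r_n)_{n\geq 0}\subset A$ by
$$ r_{n+1} = r_n - \frac{f(r_n)}{f'(r_n)}, $$
starting at the given $r_0$, and show by induction that (i) $v(f'(r_n))=v(f'(r_0))$, (ii) $v(f(r_n))-2v(f'(r_n))\geq 2^n\bigl(v(f(r_0))-2v(f'(r_0))\bigr)$, and (iii) $v(r_{n+1}-r_n)\geq v(f(r_0))-v(f'(r_0))+2^n\bigl(v(f(r_0))-2v(f'(r_0))\bigr)$. The hypothesis $v(f(r_0))>2v(f'(r_0))$ ensures that these lower bounds grow at each step.

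The main engine is a pair of Taylor expansions in $A[x,h]$: writing $h_n=-f(r_n)/f'(r_n)$,
$$ f(r_n+h_n)=f(r_n)+f'(r_n)h_n+\sum_{i\geq 2}\frac{f^{(i)}(r_n)}{i!}h_n^i=\sum_{i\geq 2}\frac{f^{(i)}(r_n)}{i!}h_n^i, $$
$$ f'(r_n+h_n)=f'(r_n)+\sum_{i\geq 1}\frac{f^{(i+1)}(r_n)}{i!}h_n^i. $$
Since the Taylor coefficients $f^{(i)}(r_n)/i!$ lie in $A$ (here I would pause briefly to note that $f^{(i)}(x)/i!\in A[x]$ as a polynomial identity, avoiding any concerns when $\operatorname{char}(K)$ divides $i!$), each term in the first sum has valuation at least $2v(h_n)$, and each extra term in the second has valuation at least $v(h_n)>v(f'(r_n))$. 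This yields $v(f(r_{n+1}))\geq 2v(h_n)$ and $v(f'(r_{n+1}))=v(f'(r_n))$, from which the three inductive claims fall out mechanically.

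By (iii), $v(r_{n+1}-r_n)\to\infty$, so $(r_n)$ is Cauchy; completeness of $K$ gives a limit $r\in A$. Continuity of $f$ and $v$, together with (ii), forces $f(r)=0$. Also, telescoping (iii) and taking the limit gives $v(r-r_0)\geq v(f(r_0))-v(f'(r_0))>v(f'(r_0))$, which is the asserted bound.

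For uniqueness, suppose $r'\in A$ is another root with $v(r'-r_0)>v(f'(r_0))$. The ultrametric inequality gives $v(r'-r)>v(f'(r_0))$. Expanding
$$ 0=f(r')-f(r)=f'(r)(r'-r)+(r'-r)^2\,g(r,r') $$
for some $g\in A[x,y]$, and comparing valuations, yields $v(r'-r)\leq v(f'(r))=v(f'(r_0))$, a contradiction unless $r'=r$. The only real subtlety is the bookkeeping with the Taylor coefficients $f^{(i)}/i!$, which must be handled as integer polynomial coefficients rather than via division, so that nothing depends on the residue characteristic.
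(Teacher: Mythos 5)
The paper does not prove this lemma: it is quoted verbatim from Weiss's \emph{Algebraic Number Theory} (Prop.~3.1.2) and used as a black box. Your proof supplies the standard Newton-iteration argument, which is indeed the one underlying that reference, so there is no methodological divergence to compare --- you are just filling in a citation.

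The argument is essentially correct, including the careful remark that $f^{(i)}/i!$ should be read as an integer-coefficient polynomial (the divided difference), and the clean uniqueness step via the factorization $f(y)-f(x)=f'(x)(y-x)+(y-x)^2g(x,y)$ with $g\in A[x,y]$. One small bookkeeping slip: your inductive claim (iii) cannot hold as written. Set $a=v(f(r_0))$, $b=v(f'(r_0))$, $c=a-2b>0$. At $n=0$ you have $v(r_1-r_0)=v(h_0)=a-b$ exactly, whereas (iii) asserts $v(r_1-r_0)\ge (a-b)+c$, which is strictly larger. The correct invariant that "falls out mechanically" from $v(f(r_{n+1}))\ge 2v(h_n)$ and $v(f'(r_{n+1}))=v(f'(r_n))$ is
\[
v(r_{n+1}-r_n)\;\ge\; v(f'(r_0))+2^n\bigl(v(f(r_0))-2v(f'(r_0))\bigr),
\]
equivalently $(a-b)+(2^n-1)c$ rather than $(a-b)+2^n c$. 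The corrected bound still tends to $\infty$, is still minimized at $n=0$ where it equals $a-b$, and hence still yields $v(r-r_0)\ge v(f(r_0))-v(f'(r_0))$ after telescoping; so the final conclusions are unaffected. You might also make explicit why $v(f'(r))=v(f'(r_0))$, which is needed in the uniqueness step: it follows from $f'(r)-f'(r_0)=(r-r_0)\tilde g(r,r_0)$ with $\tilde g\in A[x,y]$ and $v(r-r_0)>v(f'(r_0))$.
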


Any $r_0\in A$ satisfying the hypothesis of Lemma~\ref{Hensel} is
called an {\em approximate root} of $f$. The corresponding root
$r\in A$ of $f$ can be obtained as the limit $
r=\lim_{n\to\infty}r_n$ of the sequence given by Newton's iteration
$r_{n+1}=r_n-f(r_n)/f'(r_n)$. We also have that
$v(f'(r))=v(f'(r_0))\neq\infty$ and therefore $r$ is always a simple
root of $f$.

\begin{lemma} \label{divis} Under the same notations of Lemma~\ref{Hensel},
let $$f=a_tx^{\alpha_t}+\cdots+a_1x^{\alpha_1}+a_0\in K[x] \ \mbox{
with }  \   0<\alpha_1<\cdots<\alpha_t.$$ Assume that
$p\,\nmid\,\alpha_{i+1}-\alpha_i$ for some $i$, $0\le i \le t-1$,
and that the segment defined  by  $(\alpha_i,v(a_i))$ and
$(\alpha_{i+1},v(a_{i+1}))$ is one of the segments of the  Newton
polygon $NP(f)$ of $f$. Let
$-m_i:=(v(a_{i+1})-v(a_i))/(\alpha_{i+1}-\alpha_i)$ denote its
slope. Then, the roots of $f$ in $ K^*$ that have valuation $m_i$
are all simple and are in one-to-one correspondence with the roots
of the binomial
$$g_i=a_ix^{\alpha_i}+a_{i+1}x^{\alpha_{i+1}}.$$
Moreover, the number
of roots of $g_i$ in $K^*$ equals $\gcd(q-1,\alpha_{i+1}-\alpha_i)$
when $e\,m_i\in\Z$ and the first digit of  $a_{i+1}/a_i$ is a
$(\alpha_{i+1}-\alpha_i)$-th power in $A/\M$, or zero otherwise. In
particular, the number of roots of $f$ in $K^*$ with valuation $m_i$
is bounded by  $q-1$.
\end{lemma}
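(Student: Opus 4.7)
The plan is to rescale the variable so that roots of $f$ of valuation $m_i$ become units in the ring of integers $A$, reduce modulo $\M$, and then lift roots back by Hensel's lemma (Lemma~\ref{Hensel}). The hypothesis $p \nmid \alpha_{i+1}-\alpha_i$ is used exactly so that the residue-level derivative is a unit, making the lifting unique and automatically simple. The edge case $em_i \notin \Z$ is immediate: here $m_i \notin v(K^*)=\frac{1}{e}\Z$, so $f$ has no $K^*$-root of valuation $m_i$, and neither does $g_i$ (every nonzero root of the binomial $g_i$ has valuation $m_i$), matching the claim.

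Assume $em_i \in \Z$. Fix $\lambda \in K^*$ with $v(\lambda)=m_i$ and set $h(x):=c^{-1}f(\lambda x)$ with $c \in K^*$ realizing $\min_j(v(a_j)+\alpha_j m_i)$, so that $h \in A[x]$ and $\bar h \neq 0$. The substitution $r \mapsto r/\lambda$ gives a multiplicity-preserving bijection between $K^*$-roots of $f$ of valuation $m_i$ and $A^*$-roots of $h$. The Newton polygon hypothesis ensures that the minimum is attained exactly at the indices $i$ and $i{+}1$: convexity of $NP(f)$ together with the fact that no other $(\alpha_j,v(a_j))$ lies on (or below the extension of) the chosen segment forces strict inequality for $j\neq i,i{+}1$. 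Hence $\bar h(x)=\bar b_i x^{\alpha_i}+\bar b_{i+1}x^{\alpha_{i+1}}$ with $\bar b_i,\bar b_{i+1} \in \Fq^*$, and the identical rescaling applied to $g_i$ yields a nonzero scalar multiple of the same reduction.

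The roots of $\bar h$ in $\Fq^*$ are the solutions of $y^n = -\bar b_i/\bar b_{i+1}$ (with $n:=\alpha_{i+1}-\alpha_i$), numbering $\gcd(q-1,n)$ when the right hand side is an $n$-th power in $\Fq^*$ and $0$ otherwise. Unwinding the choice of $\lambda$ and $c$ (for instance, taking $c=a_i\lambda^{\alpha_i}$, so that $\bar b_{i+1}/\bar b_i$ is the first digit of $a_{i+1}/a_i$) identifies this with the condition stated in the lemma. Each such $\bar y_0$ is a simple root of $\bar h$: substituting $\bar b_i\bar y_0^{\alpha_i}=-\bar b_{i+1}\bar y_0^{\alpha_{i+1}}$ into $\bar h'(\bar y_0)$ yields $\bar b_i(\alpha_i-\alpha_{i+1})\bar y_0^{\alpha_i-1}\neq 0$, precisely because $p \nmid n$.

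For any representative $y_0 \in A^*$ of $\bar y_0$ one then has $v(h(y_0))>0$ and $v(h'(y_0))=0$, so the hypothesis of Lemma~\ref{Hensel} is satisfied and $y_0$ lifts to a unique simple root of $h$ in $y_0+\M$; every $A^*$-root of $h$ arises this way. Applying the analogous argument to $g_i$ produces a parallel bijection of its $K^*$-roots with the $\Fq^*$-roots of $\bar h$, yielding the claimed one-to-one correspondence. The bound $\gcd(q-1,n)\leq q-1$ is immediate. The main obstacle I anticipate is the bookkeeping that converts the residue-level condition ``$-\bar b_i/\bar b_{i+1}$ is an $n$-th power'' into the stated condition on the first digit of $a_{i+1}/a_i$: this requires tracking signs and the exponent of $\lambda$ through the rescaling carefully. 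Everything else is a standard combination of Newton-polygon geometry with Hensel lifting.
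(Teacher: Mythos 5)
Your proposal is correct and matches the paper's method: rescale by an element of valuation $m_i$ (the paper uses $\pi^{em_i}$) so that the target roots become units, use the strict Newton-polygon condition to reduce the relevant block of coefficients to a binomial modulo $\M$, and use $p\nmid(\alpha_{i+1}-\alpha_i)$ to make the derivative a unit so that Hensel lifting (Lemma~\ref{Hensel}) gives a bijection of necessarily simple roots; the only cosmetic difference is that the paper passes directly between approximate roots of $f$ and $g_i$ rather than through $\bar h$. The sign bookkeeping you flag (the solvability condition is really about $-a_i/a_{i+1}$, not $a_{i+1}/a_i$) is genuinely present but is also glossed over in the paper's own proof, and it does not affect the upper bound $q-1$, which is what the lemma is used for.
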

\begin{proof}
Note that  any non-zero root of  $g_i$ has necessarily valuation
$m_i$. If $e\,m_i\not\in\Z$ then there are no elements in $K^\ast$
with valuation $m_i$, i.e. no roots in $K^\ast$ of $f$ or $g_i$ with
valuation $m_i$. Let us then assume that $e\,m_i\in \Z$.

\smallskip \noindent
By making the change of variables $x\leftarrow\pi^{em_i}x$ in $f$
and $g_i$ we can reduce the proof to the case $m_i=0$, i.e.
$v(a_i)=v(a_{i+1})$ and $v(a_j)> v(a_i)$ for all $j\ne i,i+1$. By
dividing $f$ by $a_{i+1}$, we can then reduce the proof to the case
$f\in A[x]$, $a_{i+1}=1$ and $v(a_i)=0$. In particular if $g=
 a_i x^{\alpha_i}+x^{\alpha_{i+1}} $ then $f-g \in
\M[x]$.

\smallskip \noindent In this case we will show that the roots of $f$ with
valuation zero are approximate roots of $g$ and viceversa.

\smallskip \noindent
Let $r\in K^\ast$ be such that  $f(r)=0$ and $v(r)=0$. Then
$g(r)=f(r)-(f-g)(r)\in\M$, i.e. $v(g(r))>0$. Besides, since $p\nmid
\alpha_{i+1}-\alpha_i$, then
$$g'(r)=\underbrace{(\alpha_{i+1}-\alpha_i)r^{\alpha_{i+1}-1}}_{v(\cdot)=0} +
 \underbrace{\alpha_i r^{-1}g(r)}_{v(\cdot)>0}$$
has valuation zero. This means that $v(g(r))>2v(g'(r))$ and by
Lemma~\ref{Hensel}, $r$ is an approximate root of $g$.

\smallskip \noindent  Now let $r\in K^*$ be such that  $g(r)=0$, i.e. $r^{\alpha_{i+1}-\alpha_i}=-a_i$ and
therefore, since $v(a_i)=0$, $v(r)=0$.  Therefore, like above,
$v(g'(r))=0$. Also $f(r)=(f-g)(r)+g(r)$ implies $f(r)\in \M$, i.e.
${v(f(r))>0}$. Besides,
$$f'(r)=\underbrace{(f-g)'(r)}_{v(\cdot)>0}+\underbrace{g'(r)}_{v(\cdot)=0}$$
has valuation zero. Therefore $r$ is an approximate root of $f$.
This shows that there are the same number of roots, that are all
simple.

\smallskip \noindent
If the first digit of $a_i$  is not an $(\alpha_{i+1}-\alpha_i)$-th
power in $A/\M$, then clearly the binomial $g(x)$ has no roots (not
even modulo $\M$). When it is  a power, then the number of roots of
$g$ modulo $\M$ is exactly $\gcd(q-1,\alpha_{i+1}-\alpha_i)$  since
there are exactly that many $(\alpha_{i+1}-\alpha_i)$-th roots of
unity in $\Fq$ (the multiplicative group $\Fq^\times$ is cyclic with
$q-1$ elements). Since $p\nmid \alpha_{i+1}-\alpha_i$, each of these
roots lifts via Hensel lemma to a unique root of $g$ in $K^\ast$.
\end{proof}

\begin{prop}\label{lem6}
Let $p$ be an odd prime number and let $K$ be a finite extension of
$\Qp$ with ramification index~$e$ and residue class degree~$f$, such
that $p-1>e$, and set $q=p^f$.
 Then
\begin{align*}B_1(t,K)\leq  \big((t-1)D_1(t,K)+1\big)& (q-1)  \  \mbox{ and }
\\
& B_\m(t,K)\leq\big((t-1)D_\m(t,K)+1\big)(q-1).
\end{align*}
\end{prop}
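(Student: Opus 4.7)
The plan is to follow the Newton-polygon, slope-by-slope decomposition used in the proof of Proposition~\ref{lem3}, but improve the count on one distinguished slope via Lemma~\ref{divis}, saving a factor of $D_\m(t,K)$. I carry out the argument for $B_\m$; the same reasoning with $D_1$ in place of $D_\m$ and without counting multiplicities yields the bound for $B_1$.

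I begin with two preliminary reductions on $f=a_0+a_1x^{\alpha_1}+\cdots+a_tx^{\alpha_t}$. First, factoring out $x^{\alpha_0}$ does not affect non-zero roots, so I may assume $a_0\neq 0$ and $\alpha_0=0$. Second, I reduce to the case $p\nmid\gcd(\alpha_1,\ldots,\alpha_t)$: if $p$ divides this gcd, then $f(x)=g(x^p)$ with $g\in K[x]$ having the same number of terms. Since $p-1>e$ implies $p-1\nmid e$, Lemma~\ref{raicesunidad} shows that the only $p$-th root of unity in $K$ is $1$, so $r\mapsto r^p$ is injective on $K^\ast$. Moreover, for any root $s$ of $g$ of multiplicity $k$ with $s=r^p$, the identity $\tfrac{d}{dx}(x^p-s)|_{x=r}=pr^{p-1}\neq 0$ shows that $r$ is a simple root of $x^p-s$, so writing $g(y)=(y-s)^kh(y)$ with $h(s)\neq 0$ gives $f(x)=(x^p-s)^kh(x^p)$, from which one reads off that the multiplicity of $r$ in $f$ equals $k$. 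Hence the nonzero roots of $f$ in $K^\ast$ inject with multiplicity preserved into those of $g$, so the multiplicity-counted root number of $f$ is at most that of $g$; I iterate until the gcd condition holds. After the reduction, since $\alpha_0=0$ is divisible by $p$ but the $\alpha_i$ are not all divisible by $p$, there exists $i\in\{0,\ldots,t-1\}$ with $p\nmid\alpha_{i+1}-\alpha_i$.

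Next I consider the Newton polygon $NP(f)$, which has $r\leq t$ segments with distinct slopes $-m_1,\ldots,-m_r$. For each slope the argument of Proposition~\ref{lem3} (shift $x\to\pi^{em_j}x$, normalize by the coefficient of minimum valuation, reduce modulo $\M$, and lift residue roots) bounds the number of roots of $f$ with valuation $m_j$ by $B_1(t,\Fq)\,D_\m(t,K)\leq (q-1)\,D_\m(t,K)$. I then split into two cases. If $r\leq t-1$, the sum over slopes is at most $(t-1)(q-1)D_\m(t,K)\leq ((t-1)D_\m(t,K)+1)(q-1)$, and we are done. If $r=t$, then all $t+1$ points $(\alpha_i,v(a_i))$ are vertices of $NP(f)$, so every segment is between a consecutive pair $(\alpha_i,v(a_i))$, $(\alpha_{i+1},v(a_{i+1}))$. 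The preliminary reduction then provides an index $i$ with $p\nmid\alpha_{i+1}-\alpha_i$ whose segment sits inside $NP(f)$, so Lemma~\ref{divis} bounds the roots of $f$ with slope $-m_i$ by $\gcd(\alpha_{i+1}-\alpha_i,q-1)\leq q-1$ simple roots. The remaining $t-1$ slopes each contribute at most $(q-1)D_\m(t,K)$, yielding the total $(q-1)+(t-1)(q-1)D_\m(t,K)=((t-1)D_\m(t,K)+1)(q-1)$.

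The main technical point is the multiplicity-preservation in the second reduction, for which the hypothesis $p-1>e$ (via Lemma~\ref{raicesunidad}) is essential: without it, a root $s$ of $g$ could lift to multiple $p$-th roots in $K^\ast$, breaking the injection, and the multiplicity count would need to account for a non-trivial kernel of $r\mapsto r^p$. Once that reduction is in place, the rest is a routine case analysis on whether $NP(f)$ attains its maximum possible number $t$ of segments; exactly in that extremal case Lemma~\ref{divis} applies and saves a factor of $D_\m(t,K)$ on one slope, which is precisely the improvement we seek over Proposition~\ref{lem3}.
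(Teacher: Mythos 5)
Your proof is correct and follows essentially the same route as the paper: slope-by-slope decomposition of the Newton polygon, with the $f(x)=g(x^p)$ descent justified via Lemma~\ref{raicesunidad}, and Lemma~\ref{divis} applied to save a factor of $D_\m(t,K)$ on the one segment with $p\nmid\alpha_{i+1}-\alpha_i$. The only cosmetic difference is that you perform the $g(x^p)$ reduction (and the check that a consecutive difference is prime to $p$) up front, whereas the paper carries it out only inside the case of exactly $t$ segments; the extra detail you give on multiplicity preservation under $r\mapsto r^p$ is a correct unpacking of what the paper leaves implicit.
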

\begin{proof}
We proceed as in the proof of Proposition~\ref{lem3}, grouping the roots
by valuation and by first digit. Let $f=a_0+a_1x^{\alpha_1} + \cdots
+ a_tx^{\alpha_t}\in K[x]$, with
$0=:\alpha_0<\alpha_1<\cdots<\alpha_t$, be a non-zero polynomial
with at most $t+1$ monomials. The Newton polygon $NP(f)$ of $f$ has
at most $t$ segments.

\smallskip
\noindent If the number of segments is bounded by $t-1$, then we
immediately get the bounds $$B_1(t,K)\leq(t-1)D_1(t,K)(q-1) \ \mbox{
and } \  B_\m(t,K)\leq(t-1)D_\m(t,K)(q-1),$$ since $B_1(t,\Fq)\le
q-1$, which are stronger than the bounds that we have to show.

\smallskip
\noindent Therefore we can assume that $NP(f)$ has exactly $t$
segments. In particular, $NP(f)$ consists of the segments
$(\alpha_i,v(a_i))-(\alpha_{i+1},v(a_{i+1}))$ for $0\le i\le t-1$.
If $p\,|\,\alpha_{i+1}-\alpha_i$ for  $0\le i\le t-1$  then
$p\,|\,\alpha _i$ for $1\le i\le t$ and  therefore $f(x)=g(x^p)$
where $g=a_0+a_1x^{\alpha_1/p} + \cdots + a_tx^{\alpha_t/p}$. The
roots of $f$ are the $p$-th roots of the roots of $g$. Since by
Lemma~\ref{raicesunidad} there is only one $p$-th root of  unity in
$K$, each root of $g$ gives at most one root of $f$, with the same
multiplicities.

\smallskip
\noindent Hence  we can reduce to the case where at least one of the
segments of $NP(f)$ satisfies $p\nmid\alpha_{i+1}-\alpha_i$.

\smallskip
\noindent In this case   Lemma~\ref{divis} implies that there are at
most $(q-1)$ roots of $f$ in $K^\ast$ with the valuation associated
to this segment, necessarily simple.  For the valuations
corresponding to the remaining $t-1$ segments, we have at most
$(t-1)D_1(t,K)(q-1)$ and $(t-1)D_\m(t,K)(q-1)$ roots of~$f$ counted
without/with multiplicities. This concludes the proof.
\end{proof}

As a consequence of Proposition~\ref{lem6}, we get the sharp bound
$B_1(1,K)=q-1$ for any finite extension $K/\Q_p$ with $p$ odd and
residue field of $q$ elements. The lower bound is attained by the
polynomial $x^{q-1}-1\in K[x]$.

\bigskip

Proposition~\ref{prop-binom} allows us to  prove the last result
needed in the proof of Theorem~\ref{thm3}.

\begin{prop}\label{thm9}
Let $K/\Qp$ be a finite extension with ramification index $e$ and
residue class degree $f$. Assume that $p>e+t$. Then
$$D_1(t,K)\leq
D_\m(t,K){\leq t}.$$
\end{prop}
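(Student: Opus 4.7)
The plan is to mirror the argument of Proposition~\ref{thm5}, adapting it to the $p$-adic setting where integers may have positive valuation. Suppose for contradiction that $D_\m(t,K)>t$, so there exist a non-zero polynomial $f=a_0+a_1x^{\alpha_1}+\cdots+a_tx^{\alpha_t}\in K[x]$, distinct roots $r_0,\ldots,r_m\in 1+\M$, and integers $s_i\le{\rm mult}(f;r_i)$ with $s_0+\cdots+s_m=t+1$. Setting $\bfs=(s_0,\ldots,s_m)$, the vanishing of $f$ with enough derivatives at each $r_i$ yields the matrix identity $M_\bfalpha^\bfs(r_0,\ldots,r_m)\,(a_0,\ldots,a_t)^T=0$, which forces $V_\bfalpha^\bfs(r_0,\ldots,r_m)=0$ and therefore $P_\bfalpha^\bfs(r_0,\ldots,r_m)=0$ by Proposition~\ref{prop-vander-mult}.

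Setting $x_i=r_i-1\in\M$, so that $v(x_i)\geq 1/e$, Lemma~\ref{formula-vander1} turns the vanishing of $P_\bfalpha^\bfs$ into
$$\sum_{1\leq\beta_1<\cdots<\beta_t}W_\bfbeta(\bfalpha)\,P_\bfbeta^\bfs(x_0,\ldots,x_m)=0.$$
The next step is to apply Lemma~\ref{prop-binom}\,(c) to rewrite $W_\bfbeta(\bfalpha)=\frac{1!\cdots t!}{\beta_1!\cdots\beta_t!}\,W_\bfst(\bfalpha)\,Q_\bfbeta(\bfalpha)$ with $Q_\bfbeta(\bfalpha)\in\Z$, and then to divide through by $W_\bfst(\bfalpha)$, which is non-zero since the coordinates of $\bfalpha$ are distinct positive integers. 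Using $Q_\bfst=1$ and $P_\bfst^\bfs=1$, this yields
$$0=1+\sum_{\bfbeta\neq\bfst}\frac{1!\cdots t!\,Q_\bfbeta(\bfalpha)}{\beta_1!\cdots\beta_t!}\,P_\bfbeta^\bfs(x_0,\ldots,x_m).$$
Since $v(-1)=0$, a contradiction will follow as soon as every summand on the right has strictly positive valuation.

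For each $\bfbeta\neq\bfst$, set $d=|\bfbeta|-t(t+1)/2\geq 1$. The hypothesis $p>e+t$ forces $t<p$, so $v_p(1!\cdots t!)=0$; moreover $v_p(Q_\bfbeta(\bfalpha))\geq 0$, and the homogeneity of $P_\bfbeta^\bfs$ of degree $d$ with non-negative integer coefficients (Proposition~\ref{prop-vander-mult}) yields $v(P_\bfbeta^\bfs(x_0,\ldots,x_m))\geq d/e$. The main obstacle is to control $v_p(\beta_1!\cdots\beta_t!)$, which can be positive whenever some $\beta_i\geq p$. Let $u$ denote the number of indices with $\beta_i\geq p$, so that $\beta_{t-u+1},\ldots,\beta_t\geq p$ by strict monotonicity. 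Legendre's formula $v_p(n!)=(n-s_p(n))/(p-1)$ gives $v_p(\beta_i!)\leq(\beta_i-1)/(p-1)$, while $\beta_{t-u+k}\geq p+k-1$ produces a lower bound on $d$ through $|\bfbeta|$. Substituting both estimates into the desired inequality $d/e>v_p(\beta_1!\cdots\beta_t!)$ reduces it to a polynomial inequality in $p,e,t,u$; I expect the tightest case to be $u=1$, simplifying to $(p-1)(p-e-t)>0$, which holds precisely under the hypothesis $p>e+t$, while larger values of $u$ give strictly more slack.
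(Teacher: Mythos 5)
Your proposal tracks the paper's proof almost line by line through the reduction to showing $P_\bfalpha^\bfs(r_0,\dots,r_m)\neq 0$, the substitution $r_i=1+x_i$, the application of Lemma~\ref{formula-vander1}, and the factorization via Lemma~\ref{prop-binom}{\bf (c)}, so you have the right structure. The gap is in the final valuation estimate. You correctly reduce the problem to showing $d>e\,v_p(\beta_1!\cdots\beta_t!)$ with $d=|\bfbeta|-t(t+1)/2$, but then you merely assert that this ``reduces to a polynomial inequality in $p,e,t,u$'' whose ``tightest case'' is $u=1$, giving $(p-1)(p-e-t)>0$, while ``larger values of $u$ give strictly more slack.'' None of this is derived; a proof cannot end with ``I expect.'' In particular, the claim that larger $u$ only helps is not obvious, since a larger $u$ simultaneously increases the factorial valuations you must beat, and the case $u=0$ (where the factorials contribute nothing and you only need $d\geq 1$, which holds because $\bfbeta\neq\bfst$) is never addressed.

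The missing step is genuinely provable along the lines you sketch, but it is most naturally done index by index rather than by the parameter $u$. The paper establishes, for each $i$, the inequality $\beta_i-i-e\,v_p(\beta_i!)\geq 0$, with strict inequality whenever $\beta_i>i$, by splitting into the cases $\beta_i<p$ (where $v_p(\beta_i!)=0$), $p\leq\beta_i<2p$ (where $v_p(\beta_i!)=1$ and one uses $p-t-e>0$), and $\beta_i\geq 2p$ (where one uses $v_p(\beta_i!)\leq\beta_i/(p-1)$). Summing over $i$ gives exactly $d-e\,v_p(\beta_1!\cdots\beta_t!)>0$. Your Legendre bound $v_p(\beta_i!)\leq(\beta_i-1)/(p-1)$ can be made to work too (for $\beta_i\geq p$ one checks $(p-1-e)\beta_i>(p-1)i-e$ using $p-1-e\geq t\geq i$ and $\beta_i\geq p$), but you must actually write this out. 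As it stands, the proof is incomplete at precisely the point where the hypothesis $p>e+t$ enters.
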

\begin{proof}
As in the proof of Proposition~\ref{thm5}, it is enough to show that
given $\bfalpha$ and $\bfs$ s.t. $|\bfs|=t+1$,
$P_\bfalpha^\bfs(r_0,\dots,r_m) \neq 0$ for any distinct
$r_0,\ldots,r_m\in 1+\M$. Write $r_i=1+x_i$ with $x_i\in\M$ for
$0\le i\le m$, then  by Lemma~\ref{formula-vander1} and using
Notation~\ref{def-binom},
$$P_\bfalpha^\bfs(1+x_0,\dots,1+x_m)= \!\!\!\sum_{1\leq
\beta_1<\cdots<\beta_t}W_\bfbeta (\bfalpha)
\,P_\bfbeta^\bfs(x_0,\ldots,x_m).$$ The term of the right-hand side
corresponding to $\bfbeta={\bfst}=(1,\ldots,t)$ is equal to $W_\bfst
(\bfalpha)$,  since $P_{\bfst}^\bfs=1$, and is a non-zero integer
number  by Lemma~\ref{prop-binom}{\bf (b)} and
Observation~\ref{obsW}.

\noindent We  show that the remaining non-zero terms  for
$\bfbeta\neq{\bfst}$  have valuation strictly greater than
$v(W_\bfst (\bfalpha) )$: By Lemma~\ref{prop-binom}{\bf (c)}, their
ratio satisfies
$$\frac{W_\bfbeta(\bfalpha)\,P_\bfbeta^\bfs(x_0,\ldots,x_m) }{W_\bfst(\bfalpha)} \
= \ \frac{1!2!\cdots t!}{\beta_1!\cdots\beta_t!}\,
Q_\bfbeta(\bfalpha)\, P_\bfbeta^\bfs(x_0,\ldots,x_m),$$ where
$Q_\bfbeta(\bfalpha)\in\Z\setminus\{0\}$. Since $P_\bfbeta^\bfs$ is
homogeneous of degree $|\bfbeta|-t(t+1)/2$ and $v(x_i)\geq 1/e$ for
$0\le i\le s$, then
$$v\left(\frac{W_\bfbeta(\bfalpha)\,P_\bfbeta^\bfs(x_0,\ldots,x_m) }{W_\bfst(\bfalpha)}\right)
\geq \frac{|\bfbeta|-t(t+1)/2}{e}+v_p(1!2!\cdots
t!)-v_p(\beta_1!\cdots \beta_t!).$$ Our assumption $p>e+t$ implies
that $v_p(1!2!\cdots t!)=0$, so we can write
$$v\left(\frac{W_\bfbeta(\bfalpha)\,P_\bfbeta^\bfs(x_0,\ldots,x_m) }{W_\bfst(\bfalpha)}\right)
\geq\frac{1}{e}\sum_{i=1}^t\big(\beta_i-i-ev_p(\beta_i!)\big).$$
Since  $1\leq\beta_1<\cdots<\beta_t$ with $\bfbeta\ne \bfst$, then
$\beta_i\geq i$ for $1\le i\le t$ and there exists $j$ s.t.
$\beta_j>j$. We consider three cases:
\begin{itemize}
\item if $\beta_i<p$, then $\beta_i-i-ev_p(\beta_i!)=\beta_i-i\geq 0$.
\item if $p\leq\beta_i<2p$, then $\beta_i-i-ev_p(\beta_i!)\geq\beta_i-i-e\geq p-t-e>0$.
\item if $\beta_i\geq 2p$, then $\beta_i-i-ev_p(\beta_i!)\geq \beta_i-i-\frac{e\beta_i}{p-1}\geq 2p(1-\frac{e}{p-1})-t>2(p-1-e)-t\geq t>0$.
\end{itemize}
In all the cases we have $\beta_i-i-ev_p(\beta_i!)\geq 0$. Moreover,
when $\beta_j>j$, then   $\beta_j-j-ev_p(\beta_j!)>0$. This proves
that $$v\left(W_\bfbeta(\bfalpha)\,P_\bfbeta^\bfs(x_0,\ldots,x_m)
\right)-v\left(W_\bfst(\bfalpha)\right)>0$$ for any
$\bfbeta\neq{\bfst}$. In particular,
$v\left(P_\bfalpha^\bfs(r_0,\ldots,r_m)\right)=v\left(W_\bfst(\bfalpha)\right)$
which implies that $P_\bfalpha^\bfs(r_0,\ldots,r_m)\ne 0$ as
desired.
\end{proof}

\begin{proof}[Proof of Theorem~\ref{thm3}]
\begin{align*}B_\m(t,K)& \le \,\big((t-1)D_\m(t,K)+1\big)(q-1)\qquad \mbox{by Proposition~\ref{lem6}}\\
& \le \, \big((t-1)t+1\big)(q-1)\qquad \mbox{by
Proposition~\ref{thm9}}\end{align*}
\end{proof}

In~\cite{Lenstra}, H.W.~Lenstra introduced another technique to
produce upper bounds for $D_\m(t,L)$. For two non-negative integers
$t$ and $m$, he defines $d_t(m)$ to be the least common multiple of
all integers that can be written as the product of at most $t$
pairwise distinct positive integers that are at most $m$. Also for
any prime $p$, for any integer $t\geq 1$, and for any real number
$r>0$, he defines
$$C(p,t,r)=\max\big\{m\in\Z_{\geq 0}\,:\,mr-v_p(d_t(m))\leq\max_{0\le i\le t}\{ir-v_p(i!)\}\big\}.$$
In \cite[Thm.~3]{Lenstra} he proves that $D_\m(t,K)\leq C(p,t,1/e)$.
Next lemma shows that under the assumption $p>t+e$, we have
$C(p,t,1/e)=t$, therefore providing an alternative proof of
Proposition~\ref{thm9}.

\begin{lemma}\label{lem-lenstra}
Let $p$ be a prime number and let $t$ and $e$ be positive integers.
Assume that $p>t+e$. Then $C(p,t,1/e)=t$.
\end{lemma}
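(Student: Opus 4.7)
My plan is to first reduce the definition of $C(p,t,1/e)$ to a clean inequality by explicitly evaluating the maximum on the right-hand side, and then verify the two bounds $C(p,t,1/e)\ge t$ and $C(p,t,1/e)\le t$ separately, with the upper bound being the substantive part.

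Since $p>t+e\ge t$, I have $v_p(i!)=0$ for every $0\le i\le t$, so
\[
\max_{0\le i\le t}\Big\{\tfrac{i}{e}-v_p(i!)\Big\}=\tfrac{t}{e},
\]
attained at $i=t$. Thus $C(p,t,1/e)$ is the largest $m\ge 0$ with $v_p(d_t(m))\ge (m-t)/e$. The lower bound $C(p,t,1/e)\ge t$ is immediate once I observe that $d_t(t)=t!$: on the one hand $t!=1\cdot 2\cdots t$ is itself a product of $t$ distinct positive integers $\le t$, and on the other hand every such product divides $t!$. Hence $v_p(d_t(t))=v_p(t!)=0$ and the inequality $t/e-v_p(d_t(t))\le t/e$ holds (with equality).

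For the upper bound, I need to show that $v_p(d_t(m))<(m-t)/e$ for every $m>t$, and I would split into the cases $t<m<p$ and $m\ge p$. The first case is free: no element of $[1,m]$ is divisible by $p$, so no product of such elements is, hence $v_p(d_t(m))=0<(m-t)/e$. For $m\ge p$, the same divisibility argument used to compute $d_t(t)$ shows that $d_t(m)\mid m!$, so Legendre's formula gives
\[
v_p(d_t(m))\le v_p(m!)=\frac{m-s_p(m)}{p-1}<\frac{m}{p-1},
\]
where $s_p(m)\ge 1$ is the base-$p$ digit sum of $m$. It then suffices to verify $m/(p-1)\le (m-t)/e$, which rearranges to $m\ge (p-1)t/(p-1-e)$. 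The hypothesis $p>t+e$ forces $p-1-e\ge t$, so $(p-1)t/(p-1-e)\le p-1<p\le m$, and the inequality holds.

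The one step that requires genuine care is the final arithmetic comparison: I need the hypothesis $p>t+e$ in the precise form $p-1-e\ge t$ so that the threshold $(p-1)t/(p-1-e)$ drops strictly below $p$, ensuring that every $m\ge p$ meets the strict version of the bound. Everything else is bookkeeping: identifying $d_t(t)=t!$, reducing the right-hand side of the definition to $t/e$, and applying Legendre's formula to the divisor $d_t(m)$ of $m!$.
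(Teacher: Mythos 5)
Your proof is correct and essentially the same as the paper's: both establish the lower bound from $d_t(t)=t!$ and the upper bound from $d_t(m)\mid m!$ together with a bound on $v_p(m!)$, exploiting $p>t+e$ in the form $p-1-e\ge t$. The paper invokes the three-case split $m<p$, $p\le m<2p$, $m\ge 2p$ from the proof of Proposition~\ref{thm9}, whereas you merge the latter two cases into one via Legendre's formula and the observation that $t(p-1)/(p-1-e)<p$ --- a mild streamlining of the same argument rather than a genuinely different route.
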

\begin{proof}
Observe that $d_t(t)=t!$, and then $C(p,t,1/e)\geq t$ by definition.
For the other inequality, we only have to show that for any $m > t$
and for any $i\leq t$,  $m/e-v_p(d_t(m))>i/e-v_p(i!)$ holds. By our
assumption on $p$, we clearly have $v_p(i!)=0$. Moreover, by
considering the same three cases analyzed during the proof of
Proposition~\ref{thm9}, we have $m-i>ev_p(m!)$. This concludes the
proof, since $v_p(m!)\geq v_p(d_t(m))$.
\end{proof}


\section{Lower bounds}\label{sec-lower}

\begin{proof}[Proof of Example~\ref{thm4}]
Note first that $1$ is a double root, since $f(1)=f'(1)=0$ and $f''(1)=(q-1)^2(1+q^{q-1})q^{q-1}\neq 0$.
Also $q$ is an approximate root of $f$, since
$$f(q)=q^{2(q-1)}\left(q^{(q-1)(q^{q-1}-1)}-1\right)\;\;\Rightarrow\;\;v(f(q))=2(q-1)f_K$$
and also,
$$f'(q)=(q-1)(1+q^{q-1})q^{q-2}\left(q^{(q-1)q^{q-1}}-1\right)\;\;\Rightarrow\;\;v(f'(q))=(q-2)f_K.$$
Then $v(f(q))>2v(f'(q))$. Newton's method (Lemma~\ref{Hensel}) gives
an exact root $r\in K$ of~$f$ such that
$v(r-q)>v(f'(q))=(q-2)f_K\geq f_K=v(q)$. This implies that
$v(r)=v(q)=f_K$, and in particular $r\neq 1$. Note also that if
$x\in K$ is a root of $f$ and $\xi\in K$ is a $(q-1)$-root of the
unity (i.e.~$\xi^{q-1}=1$), then $f(x\xi)=0$, and similarly, if
$f'(x)=0$ then $f'(x\xi)=0$. Since there are exactly $q-1$ different
$(q-1)$-roots of  unity $\xi_1,\ldots,\xi_{q-1}\in K$, the
polynomial $f$ has $\xi_i$ as a double root and $r\xi_i$ as a simple
root for  $1\le i\le q-1$. This gives at least $3(q-1)$ roots
counted with multiplicities.
\end{proof}

\begin{proof}[Proof of Theorem~\ref{thm99}]
Let $A$ be the ring of integers of $K$ and let $\M$ be the maximal
ideal of $A$. We proceed by induction in $t$, proving a much
stronger statement: for any $t\geq 1$, there exists a polynomial
$f_t$, such that
\begin{enumerate}
\item $f_t\in\Zp[x]$,
\item $f_t$ is monic and it has non-zero constant term,
\item $f_t$ has $t+1$ terms,
\item $f_t$ has all exponents divisible by $q-1$,
\item $f_t$ has one simple root in $p^{t-1}(1+p\Zp)$,
\item $f_t$ has two simple roots in each $p^i(1+p\Zp)$ for $0\leq i<t-1$ if $t>1$,
\item $f_t$ has non-zero discriminant.
\end{enumerate}
By multiplying each of the roots of $f_t$ by the $(q-1)$-th roots of the unity in~$K$, we obtain at least
$(2t-1)(q-1)$ simple roots for $f_t$ in $K^\ast$. Items 3, 5 and 6 imply that the polynomial $f_t$ has a Newton
polygon with exactly $t$ segments (with slopes $0,-1,-2,\ldots,-t+1$), and therefore all its roots (even the
ones in $\overline{K}$) have necessarily valuation $0,1,\ldots,t-1$.

\smallskip
\noindent  The polynomial $f_1=x^{q-1}-1$ proves the case $t=1$. Now
assume that $f_t=x^{\alpha_t}+a_{t-1}x^{\alpha_{t-1}} +
\cdots+a_1x^{\alpha_1}+a_0\in\Zp[x]$ satisfies Conditions 1-7. Since
$f_t$ is monic with coefficients in~$\Zp$, all its roots in $K$
belong to $A$, and in particular $f_t(1/p)\neq 0$. Furthermore
$f_t(1/p)\in p^{-\alpha_t}(1+p\Zp)$, and therefore
$$\hat{f}_t(x):= \frac{f_t(x/p)}{f_t(1/p)} \ = \ \frac{p^{\alpha_t}f_t(x/p)}{p^{\alpha_t}f_t(1/p)} \ = \ u^{-1}\,{h(x)}
$$  where
\begin{align}\label{hx}& h(x):=
x^{\alpha_t}+a_{t-1}p^{\alpha_t-\alpha_{t-1}}x^{\alpha_{t-1}}+\cdots+a_0p^{\alpha_t}
\ \in \ \Z_p[x],\\ \nonumber &
u:=1+a_{t-1}p^{\alpha_t-\alpha_{t-1}}+\cdots+a_0p^{\alpha_t} \ \in \
1+p\Zp.\end{align} Therefore $\hat{f}_t(x)\in \Z_p[x]$ and we define
$$g_\alpha(x):=x^\alpha-\hat{f}_t(x)\in\Zp[x]$$
for $\alpha>\alpha_t$.  We show that, for suitable $\alpha>\alpha_t$
and $\varepsilon\in\Zp$, the polynomial
$f_{t+1}(x)=g_\alpha(x)+\varepsilon$ satisfies Conditions 1--7 for
$t+1$:

\smallskip
\noindent  Since $\hat{f}_t(0)\ne 0$, then
 $g_\alpha$ satisfies Conditions 1--3
for any $\alpha>\alpha_t$. In addition $g_\alpha(1)=0$ by
construction.

\smallskip
\noindent We remark that since $f_t$ and $g_\alpha$ are monic in
$\Zp[x]$, then all their roots in $\Qp$ belong to  $\Zp$. Define
$\gamma_t=\max\{v(f_t'(r))\,:\,r\in \Zp\, , \,f_t(r)=0\}$. Note that
$\gamma_t\neq\infty$ because $f_t$ has non-zero discriminant.

\smallskip \noindent
Assume  $\alpha\geq 2(\gamma_t+\alpha_t)$. We
 prove first that if
$r_0\in\Zp$ is a root of $f_t$, then $p\,r_0$ is an approximate root
of~$g_\alpha$, which induces a root $r\in\Zp$ of~$g_\alpha$
with $v(r)=v(r_0)+1$:\\
The condition $f_t(r_0)=0$ implies  $$g_\alpha(p\,r_0)=p^\alpha
r_0^\alpha \ \ \mbox{and} \ \ g'_\alpha(p\,r_0)=\alpha
p^{\alpha-1}r_0^{\alpha-1}-f_t'(r_0)/(pf_t(1/p)).$$
 Since $v(\alpha
p^{\alpha-1}r_0^{\alpha-1})\geq\alpha-1$ and
$v(f_t'(r_0)/(pf_t(1/p)))\leq\gamma_t+\alpha_t-1<\alpha/2$, then
$v(g_\alpha'(p\,r_0))<\alpha/2\leq v(g_\alpha(p\,r_0))/2$,
Lemma~\ref{Hensel} implies that $p\,r_0$ is an approximate root of
$g_\alpha$, corresponding to   a root  $r\in \Z_p$. Moreover,
$v(r-p\,r_0)>\alpha/2$, which implies $v(r-p\,r_0)>\alpha_t\geq
t\geq v(p\,r_0)$ by the observation after Conditions 1--7,  and in
particular $v(r)=v(p\,r_0)=v(r_0)+1$. Therefore  each root $r_0\in
p^i(1+p\Zp)$ for $0\leq i\le t-1$ satisfying  Conditions 5 or 6 of
$f_t$ induces a simple root $r\in p^{i+1}(1+p\Zp)$ of $g_\alpha$. We
still need to show these are all different.

\smallskip \noindent
Define $\gamma_t'=1+\max\{v(r_0-r'_0)\,:\,r_0,r'_0\in\Zp\, ,
\,f_t(r_0)=f_t(r'_0)=0 \ \mbox{and} \ r_0\neq r'_0\}$ and assume
that $\alpha\geq \max\{2(\gamma_t+\alpha_t),2\gamma_t'\}$. Then two
different roots $r_0\ne r'_0$ of $f_t$ in $\Z_p$ induce  different
roots $r\ne r'$ of  $g_\alpha$ in $\Zp$, since if $r=r'$ then
$$1+v(r_0-r'_0)=v(p\,r_0-p\,r'_0)\geq\min\{v(r-p\,r_0),
v(r'-p\,r'_0)\}>\alpha/2\geq\gamma_t',$$ in contradiction with the
definition of~$\gamma_t'$.

\smallskip \noindent
Therefore, we proved so far that for
$\alpha>2(\alpha_t+\gamma_t+\gamma_t')$,   $g_\alpha$ has at least
one simple root in $p^t(1+p\Z_p)$, two simple roots in each
$p^i(1+p\Z_p)$ for $1\le i<t$ and the root $1\in 1+p\Zp$.

\smallskip \noindent Our aim now is to produce an extra root. We    construct such a root
 in $1+p\Zp$ but different from $1$ following the following strategy. We
 start with a fixed $r_0$  congruent to $1$ modulo $p$ but not
 congruent to $1$ modulo $p^2$, and show that we can guarantee the
 existence of some $\alpha$ such that the conditions of
 Lemma~\ref{Hensel} are satisfied for $r_0$ and $g_\alpha$. In order
 to achieve this, we construct a sequence of exponents
 $\alpha^{(i)}$ such that the order of $r_0$ as a root increases.

 \smallskip
\noindent
 Fact~1 below shows that there exists $r_0$ with the required
 conditions such that $\big(v(g_{\alpha^{(i)}}'(r_0))\big)$ is bounded.
 Assuming this holds,  we can pick a large enough $i>>1$ such that $g=g_{\alpha^{(i)}}$ satisfies Conditions 1--6 in our list
for $t+1$. Let $r_1,\ldots,r_{2t+1}{\in\Zp}$ be the ${2t+1}$ simple
roots of $g$ of Conditions 5 and 6 and set  $C:=2\max\{t,
v(g'(r_j)), \,1\le j\le 2t+1\}$.

\smallskip
\noindent We will define       $f_{t+1}(x):=g(x)+\varepsilon$ for
some $\varepsilon\in\Zp$ so that $f_{t+1}$ satisfies Conditons 1--7
for $t+1$. By Lemma~\ref{Hensel}, if $v(\varepsilon)>C$, then
$v(f_{t+1}(r_j))=v(\varepsilon)>2v(f'_{t+1}(r_j))$ for any $j$.
Therefore the roots $r_1,\ldots,r_{2t+1}$ are approximate roots of
$f_{t+1}$, with corresponding induced roots
$\hat{r}_1,\ldots,\hat{r}_{2t+1}\in\Zp$ that are all different and
satisfy
$$v(\hat{r}_j-r_j)\ge v(\varepsilon)-v(f'_{t+1}(r_j))>C/2 \ge t \ge v(r_j),$$
which implies $v(\hat r_j)=v(r_j)$.  Also, if $v(\varepsilon)>
v(g(0))$, then $f_{t+1} $ has a non-zero constant term. Finally, the
discriminant of $f_{t+1}$ is a polynomial in $\varepsilon$ of
positive degree, and therefore vanishes at finitely many values of
$\varepsilon$. We conclude by selecting $\varepsilon$ with
$v(\varepsilon)>\max\{C,v(g(0))\}$ such that $f_{t+1}$ has non-zero
discriminant. This  polynomial $f_{t+1}$ satisfies Conditions 1--7.

\smallskip \noindent
The rest of the proof focuses now on guaranteeing the existence of
such an $r_0$ and such a sequence $(\alpha^{(i)})_i$. Let $r_0$ be
any element of $ 1+p\Zp$ such that $r_0\not\equiv 1\pmod{p^2}$.
Therefore $p^2\nmid r_0^{p-1}-1$ and   $\hat{f}_t(r_0)\in 1+p\Zp$.
Lemma~\ref{akp} below implies there exists a sequence of integers
$(\alpha^{(i)})_{i\geq 1}$ satisfying for all $i$:
\begin{itemize}
\item
 $\alpha^{(1)}\equiv 0 \pmod{\varphi(p)}$ and
$\alpha^{(i+1)}\equiv\alpha^{(i)}\pmod{\varphi(p^i)}$,
\item
$r_0^{\alpha^{(i)}}\equiv \ \hat{f}_t(r_0) \pmod{p^i}$, \quad  i.e.
\quad  $g_{\alpha^{(i)}}(r_0)\equiv 0 \pmod{p^i}$,
\item $q-1\,|\, \alpha^{(i)}$ and $\alpha^{(i)} \ge 2(\alpha_t+\gamma_t+\gamma_t')$.
\end{itemize}
Since $p^i \mid g_{\alpha^{(i)}}(r_0)$, then
 $v(g_{\alpha^{(i)}}(r_0))\geq i$ for all $i\in\N$.
By Fact 1 at the end of this proof, there exists some $r_0\in
1+p\Zp$ with $p^2\nmid r_0^{p-1}- 1$ such that the sequence
$\big(v(g_{\alpha^{(i)}}'(r_0))\big)_{i\ge 1}$  is bounded.
 Therefore,  fixing  $\alpha^{(i)}$ big enough, the hypotheses
 of Lemma~\ref{Hensel} are satisfied, and
$r_0$ is an approximate root of $g_{\alpha^{(i)}}$ inducing a root
$r\in \Z_p$ with  $v(r-r_0)>v(g_{\alpha^{(i)}}'(r_0))$.
\\Now for $i\ge 2$, since $r_0\equiv 1 \pmod p $ and  by Fact~2
below, $\alpha^{(i)}\equiv \alpha_t \pmod p$, we have
$$g_{\alpha^{(i)}}'(r_0) \equiv
g_{\alpha^{(i)}}'(1)\equiv \alpha^{(i)}-\alpha_t\equiv 0\pmod{p}, \
\ \mbox{i.e.} \ \ v\big(g_{\alpha^{(i)}}'(r_0)\big)\ge 1,$$ and
therefore $v(r-r_0)>1$, which implies $r=(r-r_0)+r_0\in 1+p\Zp$ and
$r\equiv r_0\not\equiv 1\pmod{p^2}$. In particular $r_0\neq 1$ is a
second simple root of $g_{\alpha^{(i)}}$ in $1+p\Z_p$.

\medskip
\noindent

\medskip
\noindent {\em Fact 1.} \ The sequence
$\big(v(g_{\alpha^{(i)}}'(r_0))\big)_{i\ge 1}$  is bounded for some
$r_0\in 1+p\Zp$ such that $p^2\nmid r_0^{p-1}- 1$.

\smallskip
\noindent {\em Proof of Fact 1.}  Assume it is not for any $r_0$
satisfying the hypotheses, then we can extract a subsequence
$(\beta_j)_{j\geq 1}$, where $\beta_j=\beta_j(r_0)$,
  of
$(\alpha^{(i)})_{i\geq 1}$ with $\beta_1\equiv 0 \pmod{\varphi( p)}$
such that for all $j$, \begin{align*} &  \beta_{j+1}\equiv
\beta_j\pmod{\varphi(p^j)}\ , \ r_0^{\beta_j}\equiv \ \hat{f}_t(r_0)
\pmod{p^j}\\ & \mbox{ and }\ v(g_{\beta_j}'(r_0))\ge j, \ \mbox{
i.e. } \ \beta_jr_0^{\beta_j-1}\equiv \hat f_t'(r_0) \pmod{p^j}\\
& \qquad \qquad \qquad \mbox{ or equivalently }
\beta_jr_0^{\beta_j}\equiv  r_0\,\hat f_t'(r_0) \pmod{p^j} .
\end{align*}
The sequence $(\beta_j)_{j\geq 1}$ has by construction a limit $\beta$ in
the set
$$\EE_p=\ilim \Z/\varphi(p^n)\Z\approx\Z/(p-1)\Z\oplus\Zp \ , \ \beta\mapsto \big(\beta_1=0,(\beta_j)_{j\ge 2}\big)$$ of
$p$-adic exponents, as defined in~\cite[Def.~2.1]{AKP06}.\\
Thus, for all $r_0\in 1+p\Zp$ such that $p^2\nmid r_0^{p-1}- 1$
there exists $\beta:=\beta(r_0)\in \EE_p$ such that
$$r_0^\beta=\hat{f}_t(r_0)\ \mbox{ and } \
\beta\hat{f}_t(r_0)=r_0\hat{f}'_t(r_0)  \ \mbox{ in } \ \Zp$$ where
the exponential of an element of $\Zp^\times$ by an element of
$\EE_p$ is defined in~\cite[Prop.~2.2]{AKP06}. Our goal is to prove
that if this is the case, then $\hat{f}_t$ needs to be a monomial,
that is, $\hat f_t=a x^\gamma$ for some $a\in \Q_p$ and $\gamma \in
\N$. But clearly $\hat f_t$ is not a monomial by construction,
giving a contradiction. Therefore this would prove  Fact~1.

   \smallskip \noindent
 Given such an $r_0$, let us
define $r_N= r_0+p^N$ for $N\geq 2$, which satisfies the same
conditions, and denote $\beta:=\beta(r_0)$ and
$\beta_N:=\beta(r_N)$. Then
\begin{align*}
 \beta_N\hat{f}_t(r_N)=r_N\hat{f}'_t(r_N) &\ \Rightarrow
\ \beta_N \hat{f}_t(r_0)\equiv r_0      \hat{f}'_t(r_0) \pmod{p^N}
\\ &\ \Rightarrow \ \beta_N \equiv\beta \pmod{p^N}.\end{align*}
Therefore, since $p-1\mid \beta$ for any $\beta$,  $\beta_N\equiv
\beta \pmod{\varphi(p^{N+1})}$ and  we can write $$\beta_N=\beta
+\varphi(p^{N+1})\,\delta \ \mbox{  for some   } \ \delta \in \Zp.$$
Now, Taylor expanding    $\hat{f}_t(r_0+p^N)$ around $r_0$ up to
order $p^{2N}$ we obtain
\begin{align*}
&(r_0+p^N)^{\beta+\varphi(p^{N+1})\,\delta}=\hat f_t(r_0+p^N) \ \Longrightarrow \\
&r_0^\beta(1+p^Nr_0^{-1})^{\beta}\left(r_0^{\varphi(p^{N+1})}(1+p^Nr_0^{-1})^{\varphi(p^{N+1})}
\right)^{\delta}\equiv
\hat{f}_t(r_0)+p^N\hat{f}'_t(r_0)\pmod{p^{2N}}.\end{align*} We write
$r_0=1+p\,x_0$ and therefore, since
$$r_0^{\varphi(p^{N+1})}=(1+p\,x_0)^{(p-1)p^{N}}=1+p^{N+1}u_N(r_0)$$
for some $u_N(r_0)\in\Zp$, we get
$$\hat{f}_t(r_0)(1+p^Nr_0^{-1}\,\beta)(1+p^{N+1}u_{N}(r_0)\,\delta)\equiv \hat{f}_t(r_0)+p^N\hat{f}'_t(r_0)
\pmod{p^{2N}}.$$ Substracting $\hat f_t(r_0)$, multiplying by $r_0$
and dividing by $p^N$ gives
$$\hat{f}_t(r_0)\,\beta + p\,r_0u_N(r_0)\hat{f}_t(r_0)\,\delta\equiv r_0\hat{f}'_t(r_0)\pmod{p^N},$$
i.e., since  $\hat{f}_t(r_0)\,\beta=r_0\hat{f}'_t(r_0)$, we obtain
$r_0\,u_N(r_0)\hat{f}_t(r_0)\,\delta\equiv 0\pmod{p^{N-1}}$.

\noindent Now we observe that
$$u_{N}(r_0)=(r_0^{\varphi(p^{N+1})}-1)/p^{N+1}\equiv
(r_0^{p-1}-1)/p\not\equiv 0\pmod{p},$$ and therefore since
$r_0\equiv 1\pmod p$ and $\hat f_t(r_0)\equiv 1 \pmod p$, we
conclude that $\delta\equiv 0\pmod{p^{N-1}}$, i.e. $\beta_N\equiv
\beta \pmod{p^{2N-1}}$.  Going back to the identity $\beta_N\hat
f_t(r_N)=r_N\hat f'_t(r_N)$ and Taylor expanding now around $r_0$ up
to order $p^{2N-1}$  we obtain
$$\beta\,\hat{f}_t(r_0)+p^N\,\beta\,\hat{f}'_t(r_0)\equiv (r_0+p^N)\hat{f}'_t(r_0)+ p^N\,r_0\,\hat{f}''_t(r_0)
\pmod{p^{2N-1}},$$ which simplifies to
$$(\beta-1)\,\hat{f}'_t(r_0)\equiv
r_0\,\hat{f}''_t(r_0)\pmod{p^{N-1}}, \quad \forall \ N\ge 2,$$
   and therefore
$(\beta-1)\hat{f}'_t(r_0)=r_0\hat{f}''_t(r_0)$ in $\Zp$.

\smallskip \noindent This last identity
combined with $\beta\hat{f}_t(r_0)=r_0\hat{f}'_t(r_0)$ implies the
following differential equation independent from $\beta$:
$$ r_0\,\hat{f}''_t(r_0)\,\hat f_t(r_0)+\hat{f}_t(r_0)\,\hat{f}'_t(r_0)-r_0\hat{f}'_t(r_0)^2=0.$$
Since this identity holds for infinitely many $r_0\in \Z_p$, it is a
polynomial identity in $\Q_p[x]$ that can be rewritten as
$$\big(x\,\hat{f}'_t(x)/\hat{f}_t(x)\big)'=0.$$ This means that
$x\,\hat{f}'_t(x)=\gamma \,\hat{f}_t(x)$ for some $\gamma\in \Qp$,
and then  if $\hat f_t\ne 0$, then $\gamma\in \N$ and $\hat f_t= a
x^\gamma$ is a monomial.   This proves Fact 1.

\medskip \noindent  {\em Fact 2.} \  $\alpha^{(i)}\equiv\alpha_t\pmod{p}$ for
all $i\geq 2$.

\smallskip \noindent  {\em Proof of Fact 2.} We note that,   since $2\le q-1
\mid \alpha_j$ for all $j$,  Formula~(\ref{hx}) implies that in
$\Z_p[x]$,
$$h(x)\equiv x^{\alpha_t} \pmod{p^2} \ \mbox{ and }  \  u \equiv
1 \pmod{p^2} \ \Longrightarrow  \  u^{-1}\equiv 1 \pmod{p^2}.$$
Therefore $$\hat f_t(x)\equiv x^{\alpha_t} \pmod{p^2}.$$ Thus
writing $r_0=1+p\,x_0$ with $x_0\in\Zp$ and $p\nmid x_0$ we get
$$\hat{f}_t(r_0) \equiv r_0^{\alpha_t}\equiv(1+p\,
x_0)^{\alpha_t}\equiv 1+\alpha_t p\, x_0\pmod{p^2}.$$ On the other
hand, by construction, for any $i\ge 2$,
$$ \hat f_t(r_0)\equiv r_0^{\alpha^{(i)}}\equiv (1+p\, x_0)^{\alpha^{(i)}}\equiv 1 + {\alpha^{(i)}}p\,  x_0 \pmod{p^2}.$$
 This implies
$\alpha^{(i)}\equiv\alpha_t\pmod{p}$ since $p\nmid x_0$, and proves
Fact~2.
\end{proof}

\begin{lemma} \label{akp} Let $y\equiv 1\pmod p$ in $ \Z_p[x]$ and let $r\in \Zp$ be such that
$r\equiv 1 \pmod{p}$ and  $r\not\equiv 1\pmod{p^2}$. Then, given
$f,C\in \N$,  there exists a sequence of natural numbers
$(\alpha^{(i)})_{i\ge 1}$ satisfying that for all $i\in \N$,
\begin{itemize}
\item
 $\alpha^{(1)}\equiv 0 \pmod{\varphi(p)}$ and
$\alpha^{(i+1)}\equiv\alpha^{(i)}\pmod{\varphi(p^i)}$,
\item
$r^{\alpha^{(i)}}\equiv y \pmod{p^i}$,
\item $p^f-1\,|\, \alpha^{(i)}$ and $\alpha^{(i)} \ge C$.
\end{itemize}
\end{lemma}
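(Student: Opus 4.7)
The plan is to construct $(\alpha^{(i)})_{i\ge 1}$ by induction on $i$, exploiting the fact that since $p$ is odd and $r=1+p\,x_0$ with $p\nmid x_0$, $r$ is a topological generator of the pro-$p$-group $1+p\Zp$. More concretely, a binomial expansion (valid because $p\ne 2$, so the terms $\binom{p}{k}p^k$ for $k\ge 2$ all have sufficiently high valuation) yields by induction on $i$ that $(1+p\,x_0)^{p^{i-1}}\equiv 1+p^ix_0\pmod{p^{i+1}}$, and hence
$$r^{\varphi(p^i)}=r^{(p-1)p^{i-1}}\equiv 1+(p-1)\,p^ix_0\pmod{p^{i+1}},$$
so writing $r^{\varphi(p^i)}=1+p^ic_i$ we have $p\nmid c_i$.

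For the base case $i=1$, pick $\alpha^{(1)}$ to be any multiple of $p^f-1$ that is at least $C$. Since $\varphi(p)=p-1$ divides $p^f-1$, this yields $\alpha^{(1)}\equiv 0\pmod{\varphi(p)}$; and since $r, y\in 1+p\Zp$ by hypothesis, $r^{\alpha^{(1)}}\equiv 1\equiv y\pmod p$.

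For the inductive step, assume $\alpha^{(i)}$ has been constructed and write $r^{\alpha^{(i)}}=y+p^ib_i$ with $b_i\in\Zp$. I search for $\alpha^{(i+1)}$ of the form $\alpha^{(i)}+s\,\varphi(p^i)$ with $s\in\N$, which by construction satisfies $\alpha^{(i+1)}\equiv\alpha^{(i)}\pmod{\varphi(p^i)}$. Expanding modulo $p^{i+1}$ (and using that cross terms of bidegree $\ge 2$ in $p^i$ vanish mod $p^{i+1}$),
$$r^{\alpha^{(i+1)}}\equiv (y+p^ib_i)(1+p^ic_i)^s\equiv y+p^i(b_i+y\,c_i\,s)\pmod{p^{i+1}},$$
so the condition $r^{\alpha^{(i+1)}}\equiv y\pmod{p^{i+1}}$ translates into the linear congruence $s\equiv -b_i/(y\,c_i)\pmod p$, which is well defined since $y\equiv 1\pmod p$ and $p\nmid c_i$. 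Setting $m:=(p^f-1)/(p-1)\in\N$, the divisibility $(p^f-1)\mid \alpha^{(i+1)}$, given the inductive hypothesis $(p^f-1)\mid\alpha^{(i)}$, reduces to $(p^f-1)\mid s(p-1)p^{i-1}$, i.e.\ $m\mid s$ since $\gcd(m,p)=1$. Since $\gcd(p,m)=1$, the Chinese Remainder Theorem ensures that these two congruences on $s$ are simultaneously solvable; the set of admissible $s$ is an arithmetic progression with common difference $pm$, so one can choose $s$ sufficiently large to guarantee $\alpha^{(i+1)}\ge C$.

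The main obstacle is showing that $p\nmid c_i$, i.e.\ that $r^{\varphi(p^i)}\not\equiv 1\pmod{p^{i+1}}$. This is precisely where the hypotheses $p\ne 2$ and $r\not\equiv 1\pmod{p^2}$ are both essential: the first controls the binomial expansion and the second guarantees $p\nmid x_0$. Once this lifting step is established, the rest of the construction is a routine assembly of $p$-adic approximation and CRT.
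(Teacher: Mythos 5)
Your proof is correct, and the underlying mechanism is the same as the paper's, but you present it in a self-contained way. The paper's proof outsources the discrete-log lifting step to \cite[Prop.~3]{AKP06}, obtaining a sequence $\beta_i$ with $\beta_{i+1}\equiv\beta_i\pmod{\varphi(p^i)}$ and $r^{\beta_i}\equiv y\pmod{p^i}$, and then fixes the divisibility and size constraints \emph{afterwards} by setting $\alpha^{(i)}=\beta_i+k_i\varphi(p^i)$ and solving a single congruence for $k_i$ modulo $m=(p^f-1)/(p-1)$ (using $\gcd(p^{i-1},m)=1$). You instead rederive the lifting fact from scratch --- via the explicit estimate $r^{\varphi(p^i)}\equiv 1+(p-1)p^i x_0\pmod{p^{i+1}}$, valid since $p$ is odd and $p\nmid x_0$ --- and build the $\alpha^{(i)}$ directly by induction, imposing both the lifting congruence modulo $p$ and the divisibility $m\mid s$ simultaneously through one CRT step. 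Both arguments hinge on exactly the same two facts: $r\not\equiv 1\pmod{p^2}$ makes $r$ a topological generator of $1+p\Zp$ (equivalently, $p\nmid c_i$), and $\gcd(p,m)=1$. Your version is fully self-contained and arguably cleaner; the paper's is shorter because it leans on the reference. One small remark: since you already take $\alpha^{(1)}\ge C$ and each step adds a nonnegative multiple of $\varphi(p^i)$, the bound $\alpha^{(i)}\ge C$ is automatic and you need not ask for $s$ large at each step.
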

\begin{proof} We apply \cite[Proposition~3]{AKP06}  to
$g=r^\alpha$: \\
Since $r^0\equiv y \pmod p $ and $r\not\equiv 1\pmod{p^2}$ implies
$r^{p-1}\not \equiv 1 \pmod{p^2}$, then  there exists a sequence
$0=:\beta_1,\beta_2,\dots$ such that $\beta_{i+1}\equiv
\beta_i\pmod{\varphi(p^i)}$ and $r^{\beta_{i}}\equiv y
\pmod{p^{i}}$ for all $i$. \\
Now we show that there exists $k_i\in \N$ such that
$\alpha^{(i)}:=\beta_i+ k_i\varphi(p^i)$ satisfies all the
conditions. First we observe that under those conditions, since
$r\not \equiv 0 \pmod p$, then
\begin{align*}r^{\alpha^{(i+1)}}&\equiv \ r^{\beta_{i+1}}    \  \equiv  \  y \pmod{p^{i+1}}
\\ &\  \equiv \ r^{\beta_i}
 \  \equiv \  r^{\alpha^{(i)}} \pmod{p^i}\end{align*}
Therefore we only need to show that some $k_i$ satisfies the last
conditions. The congruence equation $\beta_i+ k_i\varphi(p^i) \equiv
0 \pmod{(p^f-1)}$ is equivalent, since $\beta_i\equiv 0
\pmod{(p-1)}$, to the equation
$$k_i\, p^{i-1} \equiv - \beta_i/(p-1) \pmod{(1+\cdots +p^{f-1})}$$
which solutions exist and are equal to $k_{i,0} + k (1+\cdots
+p^{f-1})$ for all $k\in \Z$, where $k_{i,0}$ is a particular
solution,  since $\gcd(p^{i-1} ,1+\cdots +p^{f-1})=1$. Clearly $k$
can be chosen big enough so that $\alpha^{(i)} \ge C$.
\end{proof}

\section*{Acknowledgements}
It is a pleasure to thank the anonymous referee for  several
interesting comments, suggestions and examples. Martin Avenda\~no
also thanks Maurice Rojas, Korben Rusek and Ashraf Ibrahim for many
fruitful discussions on upper and lower bounds of univariate sparse
polynomials over $p$-adic fields.

\end{document}